\DeclareFontFamily{OT1}{pzc}{}
\DeclareFontShape{OT1}{pzc}{m}{it}{<-> s * [1.10] pzcmi7t}{}
\DeclareMathAlphabet{\mathpzc}{OT1}{pzc}{m}{it}
\newtheorem*{rep@theorem}{\rep@title}
\newcommand{\newreptheorem}[2]{%
	\newenvironment{rep#1}[1]{%
		\def\rep@title{#2~\ref{##1}}%
		\begin{rep@theorem}}%
		{\end{rep@theorem}}}
\theoremstyle{plain}
\newtheorem*{thm*}{Theorem}
\newtheorem{thm}{Theorem}[section]
\newtheorem{cor}[thm]{Corollary}
\newtheorem{lem}[thm]{Lemma}
\newtheorem*{lem*}{Lemma}
\newtheorem{conj}[thm]{Conjecture}
\theoremstyle{definition}
 \newcommand{\Z}{\mathbb{Z}}
 \newcommand{\R}{\mathbb{R}}
\newcommand{\tet}{\vartheta}
\newcommand{\sbseq}{\subseteq}
\newcommand{\spseq}{\supseteq}
\newcommand{\vanish}[1]{}
\def\im{\mathrm{im}}%\hspace{1mm}}
\def\sbs\subset
\def\sbseq{\subseteq}
\def\langle{\left<}
\def\rangle{\right>}
\def\({\left(}
\def\){\right)}
\def\no={\,{\,|\!\!\!\!\!=\,\,}}
\def\no={\,{\,|\!\!\!\!\!=\,\,}}
\def\sbseq{\subseteq}
\def\sbseq{\subseteq}
\def\sbs\subset
\def\spseq{\supseteq}
\newcommand{\xqedhere}[2]{%
	\rlap{\hbox to#1{\hfil\llap{\ensuremath{#2}}}}}
\newcommand\Defn[1]{\textbf{#1}}
\newcommand{\cm}[1]{}
\newcommand\mbf[1]{\mathbf{#1}}
\newcommand\mr[1]{\mathrm{#1}}
\newcommand{\fld}{\mathbbm{k}}
\renewcommand\deg{\mr{deg}}
\newcommand{\bigslant}[2]{{\raisebox{.3em}{$#1$} \Big/ \raisebox{-.3em}{$#2$}}}
\renewcommand\emptyset{\varnothing}
\newcommand\x{\mathbf{x}}
\DeclareMathOperator{\susp}{susp}
\DeclareMathOperator{\cone}{cone}
\title{Beyond positivity in Ehrhart Theory}
\author[Karim Adiprasito]{Karim Alexander Adiprasito}
\address{{Karim Adiprasito \ \emph{and}\ Vasiliki Petrotou \ \emph{and}\  Johanna Steinmeyer}, Einstein Institute of Mathematics, Hebrew University of Jerusalem, 91904 Jerusalem, Israel \emph{and} Department of Mathematical Sciences, University of Copenhagen, 2100 Copenhagen, Denmark}
\email{adiprasito@math.huji.ac.il \emph{and} v.petrotou@uoi.gr \emph{and} johanna.steinmeyer@math.huji.ac.il}
\author[Stavros Papadakis]{Stavros Argyrios Papadakis}
\address{{Stavros Papadakis}, Department of Mathematics, University of Ioannina, Ioannina, 45110, Greece}
\email{spapadak@uoi.gr}
\author[Vasiliki Petrotou]{Vasiliki Petrotou}
\author[Johanna Steinmeyer]{Johanna Kristina Steinmeyer}
\date{October 19, 2022}
\keywords{Gorenstein polytopes, Lefschetz property, unimodality}
\subjclass[2010]{}%Primary 05E45, 13F55; Secondary  32S50, 14M25, 05E40, 52B70, 57Q15}%Primary  14M25, 05C38 ; Secondary 32S50, 52C25,  13F55}
\begin{document}
	
\begin{abstract}
We study semigroup algebras arising from lattice polytopes, compute their volume polynomials (particularizing work of Hochster), and establish strong Lefschetz properties (generalizing work of the first three authors). This resolves several conjectures concerning unimodality properties of the $h^\ast$-polynomial of lattice polytopes arising within Ehrhart theory.

\end{abstract}
	
	\maketitle
	
\newcommand{\AR}{\mathcal{A}}
\newcommand{\IR}{\mathcal{I}}
\newcommand{\JR}{\mathcal{J}}
\newcommand{\BR}{\mathcal{B}}
\newcommand{\CR}{\mathcal{C}}
	\newcommand{\Mu}{M}
	\newcommand{\Soc}{\mathcal{S}\hspace{-1mm}\mathcal{o}\hspace{-1mm}\mathcal{c}}
	\newcommand{\Socl}{{\Soc^\circ}}

\section{Lattice polytopes and semigroup algebras}

The main player of this paper is a convex polytope $P$ all whose vertices lie in a lattice $\Lambda$. Ehrhart theory analyzes the generating function
\[\mathrm{Ehr}_P(t)\ :=\ \sum_{i=0}^\infty \#\{iP\cap \Lambda\} t^i.\]

We can then write 
\[\mathrm{Ehr}_P(t)\ =\ \frac{h^\ast_P(t)}{(1-t)^{d+1}},\]
where $d$ is the dimension of $P$ and $\ h^\ast_P(t)=h_0^\ast+h_1^\ast t+\ldots+h_d^\ast t^d\ $\ is a polynomial of degree at most $d$.

To interpret this algebraically, we recall the following well-known construction: 
Embed the polytope in $\R^{d+1}$ at height $1$ and consider the cone over it, 
\[\cone(P)\coloneqq \R_{\geq 0} (P \times \{1\}).\] 
As such, it generates a semigroup algebra 
\[\fld[^*P]\coloneqq \fld^*[\cone(P)\cap\Z^{d+1}],\] 
graded by the last coordinate. Here $\fld$ is any field, though we  generally assume the field to be infinite to ensure the existence of an Artinian reduction. In this case $\fld[P]$ is Cohen-Macaulay \cite{Hochster} with Hilbert series $\mathrm{Ehr}_P(t)$. For a choice of linear system of parameters $\theta_0,\ldots,\theta_d\in\AR^1(P)$, the Artinian reduction
\[\AR^*(P)\coloneqq \fld^*[P]/\langle \theta_0,\ldots,\theta_d\rangle\] 
has $\dim\AR^k(P)=h^\ast_k$.  It follows that the $h^{\ast}_k$ are nonnegative.

It has been a central question in the theory of lattice polytopes to determine additional properties for these coefficients.
In particular, Hibi and Ohsugi conjectured that under two special conditions, the coefficients form a unimodal sequence \cite{OH}, see also \cite{Braun, SL}.

The first of these conditions is that $P$ has the \Defn{integer decomposition property}, short \Defn{IDP}: every lattice point of $\cone(P)$ is a nonnegative integral combination of lattice points in $P \times \{1\}$, or equivalently that $\fld^*[P]$ is generated in degree one.   

The second property is the \Defn{reflexive} property: there is a lattice point $p$ in $\Z^{d}\times\{1\}$ such that
\[\cone^\circ(P)\cap\Z^{d+1} =\ p+\cone(P)\cap\Z^{d+1} ,\]
where $\cone^\circ(P)$ is the interior of $\cone(P)$. This is equivalent to $\fld[P]$ being algebraically Gorenstein (that is, a Poincar\'e duality algebra after any Artinian reduction) with socle degree $d$ as well as to $h_k^{\ast}=h_{d-k}^{\ast}$ for all $k\leq \nicefrac{d}{2}$. Numerically on the level of the $h^*$-vector, the restriction to reflexive polytopes rather than all Gorenstein polytopes is without loss of generality. Bruns and Herzog \cite{BR07} showed that for every Gorenstein polytope there is a reflexive polytope with the same $h^*$-vector.

We resolve the following conjecture of Hibi and Ohsugi.

\begin{conj}[Hibi-Ohsugi]\label{conjecture}
	For any IDP reflexive lattice polytope $P\subset\R^d$, the coefficients of the $h^{\ast}$-polynomial are unimodal:
	\[ h_0^{\ast}\leq h_1^{\ast}\leq\ldots\leq h_{\lfloor \nicefrac{d}{2}\rfloor}^{\ast} = h_{\lceil \nicefrac{d}{2}\rceil}^{\ast} \geq \ldots \geq h_d^{\ast}\]
\end{conj}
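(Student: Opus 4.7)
The natural plan is to upgrade nonnegativity of the $h^\ast$-vector to the strong Lefschetz property for the Artinian reduction $\AR^*(P)$, then extract unimodality from Poincar\'e duality. Since $P$ is reflexive, $\AR^*(P)$ is Gorenstein of socle degree $d$, so it carries a Poincar\'e pairing $\AR^k(P)\times \AR^{d-k}(P)\to \AR^d(P)\cong \fld$. If I can produce a linear form $\ell\in\AR^1(P)$ whose powers $\ell^{d-2k}:\AR^k(P)\to\AR^{d-k}(P)$ are isomorphisms for all $k\leq d/2$, then multiplication by $\ell$ increases dimension on the lower half and decreases it on the upper half, which delivers exactly the unimodality inequalities in the conjecture. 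The IDP hypothesis is used here in a decisive way: it guarantees that $\fld^*[P]$ is generated by the lattice points of $P\times\{1\}$, so there is actually a supply of natural degree-one candidates for $\ell$.

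The heart of the proof is thus the strong Lefschetz property, and the approach I would take is the characteristic-two differentiation technique of the first three authors, adapted from simplicial spheres to these semigroup algebras. First, I would compute the volume polynomial of $\AR^*(P)$: via Hochster's description of the canonical module of $\fld^*[P]$ together with the Gorenstein property, one identifies the top degree $\AR^d(P)$ with $\fld$ and expresses the Macaulay inverse system generator as an explicit polynomial in the degree-one generators, with coefficients read off from relative volumes of faces of $P$. Working over $\fld$ of characteristic $2$, one hopes to obtain a clean formula where each monomial arises as a product over the lattice points supporting a maximal simplex in a lattice triangulation, weighted by a volume.

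Next, following the APP differentiation argument, I would show that for a generic linear form $\ell = \sum_p x_p x_p^\ast$ written in the dual variables of the lattice point generators, the multiplication map $\cdot\ell^{d-2k}$ is dual to a partial-derivative operator on the volume polynomial. In characteristic $2$ the square map kills cross terms, and a Vandermonde-style argument on the generic parameters shows that the resulting system of partial derivatives is linearly independent in the top of $\AR^*(P)$, which forces $\cdot\ell^{d-2k}$ to be injective and hence, by Poincar\'e duality dimension count, an isomorphism. This yields strong Lefschetz for generic $\ell$ over $\fld$ of characteristic $2$, from which the $h^\ast$-unimodality follows as above, and then propagates to arbitrary infinite fields by a standard semicontinuity / generic Artinian reduction argument.

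The main obstacle will be step two, where the APP technique must be transplanted from the Stanley-Reisner setting of a simplicial sphere to the toric setting of $\fld^*[P]$. In the original argument, the monomial structure of the Stanley-Reisner ring and the face-by-face local structure of the sphere make the volume polynomial factor very cleanly into a sum of squarefree monomials, which is essential to the derivative computation in characteristic $2$. For a general IDP reflexive $P$ one must replace this with a lattice triangulation of $P$ (provided by IDP and passage to a unimodular cover or a sufficiently fine subdivision), control the relations among the many lattice point generators of $\fld^*[P]$, and verify that the volume polynomial still exhibits the squarefree behavior required for the Vandermonde-style nondegeneracy, at least after a suitable genericization of the Artinian reduction. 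Handling this combinatorics of possibly non-unimodular triangulations, while keeping the Gorenstein duality and the characteristic-two squaring trick aligned, is where the real work lies.
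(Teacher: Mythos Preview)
Your high-level strategy is correct and matches the paper: reduce unimodality to the strong Lefschetz property for a generic Artinian reduction, prove Lefschetz via a characteristic-$2$ anisotropy/differentiation argument in the style of \cite{PP,APP}, and then lift to characteristic $0$. You have also correctly located the main obstacle: the passage from Stanley--Reisner rings (monomial ideals) to semigroup algebras of lattice polytopes (binomial ideals).

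However, your proposed resolution of that obstacle has a genuine gap. You suggest ``replacing this with a lattice triangulation of $P$ (provided by IDP and passage to a unimodular cover or a sufficiently fine subdivision)'' so as to recover squarefree behavior of the volume polynomial. This does not work, for two reasons. First, IDP does not supply a unimodular triangulation: whether every IDP polytope admits one is a well-known open problem, and in any case there is no general construction you could invoke. Second, even granted such a triangulation, the Stanley--Reisner ring of the triangulation and the semigroup algebra $\fld^*[P]$ are genuinely different rings with different Artinian reductions; there is no direct transfer of the Lefschetz property or of the APP squarefree differential identity from one to the other. The ``squarefree monomial'' structure that makes the simplicial argument go through is simply absent here.

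What the paper does instead is to work directly in the binomial setting and supply two new ingredients you are missing. The first is an explicit normalization of the integration map $\deg:\AR^{d+1}(P,\partial P)\to\widetilde\fld$ via flags of boundary faces (Section~\ref{sec:deg}), which plays the role of fixing the fundamental class in the absence of a canonical smooth toric model. The second, and decisive, ingredient is a \emph{Parseval-type identity} (Lemma~\ref{lem:parseval}) of the form
\[
\deg(u^2)\ =\ \sum_{\beta\in (P\cap\Lambda)^{d+1}} \deg\bigl(u\cdot \x_{\beta/2}\bigr)^2 \cdot \prod_i \theta_{i,\beta_i},
\]
valid in characteristic $2$, where $\x_{\beta/2}$ denotes the unique lattice-point monomial whose square is $\x_\beta$ when it exists. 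This identity is the binomial replacement for the squarefree structure you were hoping to import from a triangulation; it is what makes the differential identity $\partial_F\deg(u^2)=\deg(u\cdot\x_F)^2$ (Lemma~\ref{lem:diffid}) and hence anisotropy go through. Finally, the paper does not pass from anisotropy to Lefschetz by a Vandermonde argument but via the suspension/Hall--Laman reduction of \cite{AHL} (Lemma~\ref{lem:midred}), a step your proposal does not mention.
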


This is the updated form of a conjecture of Hibi \cite{Hibi92}, after Musta{\c t}a and Payne gave an example showing the necessity of the IDP assumption \cite{MP}. These conjectures in turn go back to a more general one of Stanley \cite{Stanley89}, who proposed that the unimodality may hold for a general Gorenstein standard graded integral domain. 

In fact, we shall prove statements that are more powerful than this, and apply to more general cases. For instance, in the case of polytopes that have only the IDP, we still obtain monotone decreasing coefficients in the second half, i.e.
\[ h^{\ast}_{\lfloor \nicefrac{d}{2}\rfloor}\geq \ldots \geq h_d^{\ast}. \]
Moreover, we refine this further by studying also the \emph{local} $h^{\ast}$-polynomial of a polytope, and obtain unimodality of this refined invariant for any IDP lattice polytope.

We prove Conjecture \ref{conjecture} by proving a Lefschetz property for a generic Artinian reduction of the associated semigroup algebra. 

\begin{thm}\label{thm:lef}
	If $P$ is an IDP reflexive polytope, and the characteristic of $\fld$ is $2$ or $0$, then a generic Artinian reduction $\AR^*(P)$ of  $\widetilde{\fld}^*[P]$ has the Lefschetz property, i.e. there is a linear element $\ell\in\AR^1(P)$ such that for any $k\leq \nicefrac{d}{2}$, the map
	\[ \AR^k(P) \xrightarrow{\ \cdot \ell^{d-2k}\ }\ \AR^{d-k}(P)\]
	is an isomorphism.
\end{thm}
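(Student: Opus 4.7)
The strategy is to adapt the anisotropy method of Adiprasito--Papadakis--Petrotou, developed for face rings of simplicial spheres, to the semigroup algebra $\fld^*[P]$. The first reduction is standard Poincar\'e duality: since $P$ is reflexive, $\fld^*[P]$ is Gorenstein, so $\AR^*(P)$ is a Poincar\'e duality algebra of socle degree $d$, with evaluation map $\int\colon\AR^d(P)\to\fld$. The Lefschetz map $\cdot\ell^{d-2k}$ is an isomorphism if and only if the symmetric bilinear form $B(x,y):=\int(xy\ell^{d-2k})$ on $\AR^k(P)$ is nondegenerate, and since $B(x,x)=\int(x^2\ell^{d-2k})$, it suffices to prove the \emph{anisotropy} statement $\int(x^2\ell^{d-2k})\neq 0$ for every nonzero $x\in\AR^k(P)$: if some $0\neq x$ lay in the radical of $B$, then $B(x,x)=0$ would contradict anisotropy.

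Next I would compute the volume polynomial explicitly. The IDP hypothesis ensures $\AR^*(P)$ is generated in degree one by the classes $x_v$ of lattice points $v\in P\cap\Lambda$, so a generic Artinian reduction can be written with parameters $\theta_i=\sum_{v} c_{i,v}x_v$ whose coefficients $c_{i,v}$ are algebraically independent transcendentals. Particularizing the formulas of Hochster promised in the abstract, one obtains the volume polynomial $\mathrm{Vol}_P$ encoding the socle pairing as an explicit expression in the $x_v$ over the field extension generated by the $c_{i,v}$, realized as a sum over the maximal lattice simplices of a unimodular triangulation of $P$ (existing after suitable refinement) with coefficients that are explicit rational functions of the parameters.

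Anisotropy in characteristic $2$ would then be verified via the differential-operator method. In characteristic $2$ the Frobenius $x\mapsto x^2$ is additive, and values of the form $\int(x^2\ell^{d-2k})$ can be extracted by applying polynomial operators in $\partial_{c_{i,v}}$ to $\mathrm{Vol}_P$. Concretely, given a nonzero $x\in\AR^k(P)$ represented by $\tilde x\in\fld[x_v]_k$, one exhibits a differential operator $D$ for which $D\,\mathrm{Vol}_P$ picks out a monomial appearing with nonzero coefficient in $\tilde x^2\ell^{d-2k}$, forcing $\int(x^2\ell^{d-2k})\neq 0$. The characteristic $0$ case then follows by a standard flat-family specialization argument from the characteristic $2$ result, exploiting the semicontinuity of the rank of the Lefschetz map in flat families.

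The chief obstacle I anticipate is the multiplicative complexity of the semigroup algebra relative to the Stanley--Reisner setting of the original anisotropy arguments. There the degree-$k$ monomial basis is governed by squarefree face relations and is essentially transparent, whereas here $x_v\cdot x_w=x_{v+w}$ whenever $v+w\in\cone(P)$, so many monomials in the $x_v$ collide in higher degree. The combinatorial heart of the proof must therefore be a careful choice of basis for $\AR^k(P)$ adapted to a fixed lattice triangulation (or mixed subdivision) of $P$, in which the differential-operator calculation cleanly separates contributions from distinct maximal simplices and recovers the leading term of $\tilde x^2\ell^{d-2k}$ without cancellation. The IDP hypothesis is essential here, both to guarantee degree-one generation and to ensure that the required triangulation refinements are realized within $\AR^*(P)$ itself.
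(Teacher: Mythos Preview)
Your high-level strategy---reduce Lefschetz to anisotropy, prove anisotropy in characteristic $2$ via a differential identity on the degree functional, then lift to characteristic $0$---matches the paper's, and the reductions in your first paragraph and final sentence are correct.

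The genuine gap is at the technical core. You propose to realize the volume polynomial as a sum over maximal simplices of a unimodular triangulation of $P$, ``existing after suitable refinement''. No such triangulation is known to exist for an arbitrary IDP polytope; whether IDP implies the existence of a unimodular triangulation is a well-known open problem. The paper flags exactly this as the point where the simplicial arguments break (the ideal is binomial, not monomial), and its central new ingredient is a replacement: a normalization of the integration map via flags of faces and \emph{coherent} lattice-point sets (Section~\ref{sec:deg}), together with a Parseval-type identity (Lemma~\ref{lem:parsevald}) expressing $\deg(u^2)$ as a quadratic form in the values $\deg(u\cdot \x_\gamma)$. From this the differential identity $\partial_F\deg(u^2)=\deg(u\cdot \x_F)^2$ of Lemma~\ref{lem:diffid} follows, and anisotropy with it; your proposed basis adapted to a triangulation cannot substitute for this step. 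A related divergence: you attack $\int(x^2\ell^{d-2k})$ directly, whereas the paper first uses the suspension reduction (Lemma~\ref{lem:midred}) to reach a middle-degree statement $\deg(u^2)\neq 0$ on an auxiliary lattice sphere with no Lefschetz factor present---this is necessary because the Parseval and differential identities are formulated only for pure squares, so the $\ell$-factor in your third paragraph would obstruct the clean extraction you sketch.
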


Here, \emph{generic} shall mean that the Artinian reduction is taken by linear forms 
\[\theta_0,\ldots,\theta_d,\quad \text{where} \quad \theta_i = \sum_{p\in P\cap\Lambda} \theta_{i,p},\] with transcedentally independent coefficients $\theta_{i,p}$, necessitating passing to a transcendental field extension $\widetilde{\fld}$ of $\fld$. 

This is a rather extreme choice of linear system of parameters, necessitated by the proof via anisotropy. We want to emphasize the importance of the choice of l.s.o.p., as it makes a crucial difference for the ring we end up working with. 
For a specific, and somewhat canonical, choice of l.s.o.p., $\AR^*(P)$ is isomorphic to the orbifold Chow ring of the associated toric Deligne-Mumford stack \cite{BCS}. In contrast to this choice, we make of a generic Artinian reduction here. As graded vector spaces, the results are isomorphic and the inequalities on the dimensions of the graded pieces remain true. Yet, as observed in \cite{AHL}, whether an Artinian reduction admits a Lefschetz element depends on the Artinian reduction, and not only on $\fld^*[P]$. That such ``bad'' Artinian reductions exist in the present context was observed in \cite{BD16}, giving an example of an IDP reflexive simplex and an Artinian reduction of the associated semigroup algebra which does not even admit a weak Lefschetz element. The linear system of parameters chosen there is however not the canonical system for the orbifold Chow ring, and whether that ring has the Lefschetz property remains open.

As the composition of individual multiplications with $\ell$, the Lefschetz isomorphism gives us an injection in the first half and a surjection in the second half, and thus the desired inequalities on the dimensions of the graded pieces:

\begin{thm}
The $h^\ast$-polynomial of a reflexive IDP lattice polytope has a unimodal sequence coefficients. Stronger, we get that \ $(h_{i+1}^{\ast}-h_{i}^{\ast})_{0\leq  i\leq \nicefrac{d}{2}}$ \ is an $M$-vector.
\end{thm}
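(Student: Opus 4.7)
The plan is to deduce both assertions from Theorem~\ref{thm:lef}. Let $\ell\in\AR^1(P)$ be a Lefschetz element of the generic Artinian reduction, so that $\cdot\,\ell^{d-2k}\colon \AR^k(P)\to\AR^{d-k}(P)$ is an isomorphism for every $k\leq\nicefrac{d}{2}$. The first step is to factor this composite into $d-2k$ individual multiplications by $\ell$. Because the composite is bijective, each intermediate map $\cdot\,\ell\colon \AR^i(P)\to\AR^{i+1}(P)$ must be injective for $i<\nicefrac{d}{2}$ and surjective for $i\geq\nicefrac{d}{2}$. Comparing graded dimensions immediately yields the unimodal chain
\[ h^{\ast}_0\leq h^{\ast}_1\leq \cdots \leq h^{\ast}_{\lfloor d/2\rfloor} = h^{\ast}_{\lceil d/2\rceil}\geq \cdots\geq h^{\ast}_d, \]
where the middle equality uses the reflexivity symmetry $h^{\ast}_k=h^{\ast}_{d-k}$.

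For the finer $M$-vector statement, I would next form the quotient $B^{\ast}\coloneqq \AR^{\ast}(P)/\ell\AR^{\ast}(P)$. The IDP hypothesis forces $\fld^{\ast}[P]$, and hence $\AR^{\ast}(P)$, to be generated in degree one, so $B^{\ast}$ is a standard graded commutative $\fld$-algebra. Injectivity of multiplication by $\ell$ in the relevant range yields $\dim_{\fld} B^{i+1}=h^{\ast}_{i+1}-h^{\ast}_{i}$ whenever $i+1\leq \lfloor d/2\rfloor$; together with $\dim_{\fld} B^0=1=h^{\ast}_0$, this identifies the successive differences with an initial segment of the Hilbert function of $B^{\ast}$. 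By Macaulay's theorem, the Hilbert function of any standard graded commutative $\fld$-algebra is an $M$-vector (equivalently, an O-sequence), and truncating an $M$-vector to an initial segment produces again an $M$-vector. This gives the claim.

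The main obstacle is already absorbed into Theorem~\ref{thm:lef}, where the hard work of exhibiting a Lefschetz element for the generic reduction lives; once that input is available, the derivation above is formal. The only residual care is to confirm that passage to the quotient $B^{\ast}$ preserves the standard-graded structure, which is precisely where the IDP assumption is indispensable, and to keep track of the range of degrees on which injectivity of multiplication by $\ell$ is guaranteed by the Lefschetz isomorphism.
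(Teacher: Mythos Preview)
Your proposal is correct and follows essentially the same route as the paper: factor the Lefschetz isomorphism of Theorem~\ref{thm:lef} into single multiplications by $\ell$ to get unimodality, and read off the $M$-vector property from the Hilbert function of the standard graded quotient $\AR^{\ast}(P)/\ell\,\AR^{\ast}(P)$ via Macaulay's theorem. You have in fact spelled out more detail than the paper does (e.g.\ explicitly invoking IDP for standard-gradedness and Macaulay's theorem for the $M$-vector conclusion), and you correctly restricted the injectivity range to $i+1\le\lfloor d/2\rfloor$.
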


Here the $M$-vector property too immediately follows from the Lefschetz property.
The vector of differences $\max\{h^\ast_i- h^\ast_{i-1},0\}$  is the Hilbert vector of a standard graded algebra, namely 
\[\bigslant{\mathcal{A}^\ast(P)}{\ell \mathcal{A}^\ast(P)}.\]

We follow the recent breakthroughs of Adiprasito \cite{AHL}, Papadakis and Petrotou \cite{PP, APP}, though our work requires a critical new ingredient. This is due to the fact that the proofs employed previously for Stanley-Reisner rings made use of explicitly combinatorial techniques to reach the desired goal, and the algebra we investigate here is not immediately as governed by a combinatorial struture as the previous one, being cut out by binomial ideals rather than monomials. And so while Hochster was the one to gift us with understanding of the canonical module, he was unable to determine a natural normalization of the fundamental class. We give one here, compatible with the case of Stanley-Reisner rings. From this, we then get the key identity, reminiscent of Parseval's identity in Fourier analysis. 

The paper is organized as follows. We start in Section~\ref{sec:lattice} by generalizing the setup in order to state our main theorem and deduce the individual numerical corollaries within Erhart theory. For sake of completeness, we then recall in Section~\ref{sec:ani} the necessary parts of the machinery of \cite{AHL,PP,APP} in order to prove the Lefschetz statements by way of anisotropy. Following this thorough setup, we give our new contributions to the theory. Section~\ref{sec:deg} contains the normalization of the fundamental class and an auxiliary differential identity, while Section~\ref{sec:parseval} contains the key identity of Parseval type. We finish with a discussion of remaining open questions in Section~\ref{sec:discussion}.

\section{Poincar\'e, Gorenstein and the Lefschetz properties}\label{sec:lattice}

For $P$ reflexive, the semigroup algebra $\fld^*[P]$ is Gorenstein of Krull dimension equal to the dimension of the polytope plus one \cite{Bruns-Herzog}. After an Artinian reduction using a linear system of parameters of length equal to the Krull dimension, we arrive at a Poincar\'e duality algebra of socle degree $d$. 

For general IDP polytopes, the situation is a little more delicate. One can force Poincar\'e duality however, using the usual trick: we allow for relative objects. 
The $\fld^*[P]$-module defined by
\[ \fld^*[P,\partial P]\coloneqq\fld^*[(\cone(P)\setminus\partial \cone(P))\cap\Lambda'] \]
is the canonical module of the Cohen-Macaulay ring $\fld^*[P]$ \cite{Hochster}.
After Artinian reduction, we are left with a perfect bilinear pairing
\[\AR^k(P)\ \times\ \AR^{d+1-k}(P,\partial P)\ \longrightarrow\ \AR^{d+1}(P,\partial P)\ \cong\ \fld.\]

\begin{thm}\label{thm:rellef}
If $P$ is an IDP polytope of dimension $d$, and the characteristic of $\fld$ is $2$ or $0$, then a generic Artinian reduction $\AR^*(P)$ of  $\widetilde{\fld}^*[P]$ has the relative Lefschetz property, i.e. there exists a linear element $\ell\in\AR^1(P)$ such that for all $k\leq \nicefrac{d+1}{2}$,
\[\AR^k(P,\partial P)\ \xrightarrow{\ \cdot \ell^{d+1-2k}\ }\ \AR^{d+1-k}(P)\]
is an isomorphism. 
Moreover, the system of parameters can be chosen such that the restriction to any face forms a linear system of parameters when restricted to that face (meaning that its length does not exceed the depth of the module on that face.)
\end{thm}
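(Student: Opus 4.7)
The plan is to adapt the anisotropy strategy of \cite{AHL, PP, APP} to the semigroup-algebra setting, leveraging the two new inputs announced in the introduction. First, I would construct the generic Artinian reduction carefully, setting $\theta_i = \sum_{p \in P \cap \Lambda} \theta_{i,p} x_p$ with algebraically independent transcendentals $\theta_{i,p}$ over $\fld$. Genericity guarantees that for every face $F$ of $P$, the restriction $\sum_{p \in F \cap \Lambda} \theta_{i,p} x_p$, after pruning to match the Krull dimension of $F$, forms a linear system of parameters on the Cohen--Macaulay semigroup algebra $\fld^*[F]$. This both establishes the last clause of the theorem and enables an inductive approach through the face lattice.

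Next, I would invoke Hochster's identification of $\fld^*[P, \partial P]$ as the canonical module of $\fld^*[P]$ to transpose the desired Lefschetz isomorphism into a non-degeneracy statement. Through the perfect pairing $\AR^k(P) \times \AR^{d+1-k}(P, \partial P) \longrightarrow \AR^{d+1}(P, \partial P) \cong \fld$, the assertion that $\AR^k(P, \partial P) \xrightarrow{\cdot \ell^{d+1-2k}} \AR^{d+1-k}(P)$ is an isomorphism is equivalent to non-degeneracy of the symmetric bilinear form $B_k(x, y) = \deg\bigl(x \cdot y \cdot \ell^{d+1-2k}\bigr)$ on $\AR^k(P, \partial P)$. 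In characteristic $2$, non-degeneracy of such a form reduces to the anisotropy statement $B_k(x, x) \neq 0$ for every nonzero $x$; characteristic $0$ then follows by the specialization argument of \cite{AHL, APP}.

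The heart of the argument, and the step I expect to be the main obstacle, is proving anisotropy. Here the two new ingredients intervene: the canonical normalization of $\deg \colon \AR^{d+1}(P, \partial P) \to \fld$ provided in Section~\ref{sec:deg}, and the Parseval-type identity of Section~\ref{sec:parseval}. Combined, these should express $\deg(x^2 \cdot \ell^{d+1-2k})$ as a sum of squares of derivatives of a representative polynomial for $x$ with respect to the transcendentals $\theta_{i,p}$, so that vanishing of $B_k(x, x)$ forces all such derivatives to vanish and, by algebraic independence of the $\theta_{i,p}$, forces $x = 0$. The obstacle is that, unlike the Stanley--Reisner case, the defining ideal of $\fld^*[P]$ is binomial rather than monomial, so the monomial combinatorics driving the earlier proofs are unavailable; the normalization of the fundamental class is precisely the substitute that makes the Parseval computation come out right.

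The proof concludes by induction on the dimension of $P$. For each proper face $F$, the relative Lefschetz theorem on $(F, \partial F)$ is known by the induction hypothesis, using the restricted l.s.o.p. supplied in the first step. This inductively controls the contribution of the boundary ideal through a decomposition of $\AR^*(P)/\AR^*(P, \partial P)$ over the facets of $P$, feeding into the anisotropy computation on the interior. Finally, a semi-continuity argument shows that a single linear form $\ell$ can be chosen so as to witness the Lefschetz property in all degrees $k \leq (d+1)/2$ simultaneously.
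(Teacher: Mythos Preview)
Your high-level architecture is right: reduce relative Lefschetz to a non-degeneracy/anisotropy statement in characteristic $2$, then lift to characteristic $0$. What is missing is the mechanism that removes the factor $\ell^{d+1-2k}$ from the pairing. The Parseval identity of Section~\ref{sec:parseval} and the differential identity of Lemma~\ref{lem:diffid} are stated and proved only in the middle-degree situation $2k=d+1$, where one is analysing $\deg(u^2)$ with $u^2$ already landing in the socle. They do not give a formula for $\deg(x^2\ell^{d+1-2k})$ in general, and your proposal does not explain how they would. In particular, when $d+1-2k$ is odd one cannot absorb $\ell^{d+1-2k}$ into $x$ to make the integrand a square, so the characteristic-$2$ square trick does not apply directly.

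The paper closes this gap by a different device: the suspension reduction of Lemma~\ref{lem:midred} (from \cite{AHL}). One views the relative Lefschetz property as the Hall--Laman relations for the ideal $\AR^*(P,\partial P)$ inside an ambient lattice sphere, and then observes that Hall--Laman in degree $k$ with Lefschetz element $\ell$ on $\Sigma$ is equivalent to Hall--Laman in degree $k+1$ with Lefschetz element $\x_{\mbf n}$ on $\susp\Sigma$. Iterating this trades away the exponent of $\ell$ one step at a time until one reaches the middle degree of some iterated suspension, where anisotropy (Theorem~\ref{thm:ani}) in the exact form proved via Parseval is all that is needed. Your proposed face-induction on $\dim P$ and the decomposition of $\AR^*(P)/\AR^*(P,\partial P)$ over facets does not substitute for this: it is unclear how the inductive Lefschetz property on lower-dimensional faces would feed into anisotropy of the form $B_k(x,x)=\deg(x^2\ell^{d+1-2k})$ on $P$ itself, and the paper makes no use of such an induction.
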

The final part here is important for the refined statement about local $h^\ast$-vectors, and we call it the \emph{perverted} linear system, because it is the most generic linear system agreeing with the intersection cohomology of the associated orbifold.

Note that Theorem~\ref{thm:rellef} implies Theorem~\ref{thm:lef}. For $P$ reflexive, we have an isomorphism of semigroup algebras $\fld^k[P]\xrightarrow{\cdot \x_p}\fld^{k+1}[P,\partial P]$ which passes to the Artinian reduction. For $k\leq\nicefrac{d}{2}$, we then get the Lefschetz isomorphism as the composition
\[\AR^k(P)\xrightarrow{\cdot \x_p}\AR^{k+1}(P,\partial P) \xrightarrow{\ \cdot \ell^{d-2k-1}} \AR^{d-k}(P).\]

For general IDP polytopes, Theorem~\ref{thm:rellef} still gives surjections \[\AR^{d-k}(P)\xrightarrow{\cdot\ell}\AR^{d+1-k}(P)\quad \text{and}\quad \AR^k(P)\xrightarrow{\cdot\ell^{d+1-2k}}\AR^{d+1-k}(P)\] for $k\leq \lfloor \nicefrac{d}{2}\rfloor$ and thus

\begin{cor}
	The $h^{\ast}$-polynomial of an IDP lattice polytope of dimension $d$ has monotone decreasing coefficients in the second half, i.e.
	\[ h^{\ast}_{\lfloor \nicefrac{d}{2}\rfloor}\geq \ldots \geq h_d^{\ast}. \]
	Moreover, for all $k \le \nicefrac{d+1}{2}$, we have
	\[h^{\ast}_{k}\ \ge\ h^{\ast}_{d+1-k}.\]
	In particular, for any IDP lattice polytope, Stapledon's $a$-polynomial has unimodal coefficients \cite{Stapledon}.
\end{cor}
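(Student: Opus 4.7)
The plan is to deduce all three conclusions from Theorem~\ref{thm:rellef} by factoring the relative Lefschetz isomorphism in two different ways through the natural inclusion of the canonical module into the algebra. Preliminarily, I would record the Hilbert dimensions: $\dim\AR^k(P)=h^*_k$ by construction, while $\dim\AR^k(P,\partial P)=h^*_{d+1-k}$ via Ehrhart--Macdonald reciprocity applied to the interior lattice-point generating function. Thus the Lefschetz isomorphism $\AR^k(P,\partial P)\xrightarrow{\cdot\ell^{d+1-2k}}\AR^{d+1-k}(P)$ is between spaces of matching dimension $h^*_{d+1-k}$, as it must be.

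Since $\fld^*[P,\partial P]$ sits inside $\fld^*[P]$ as the ideal of interior lattice points of the cone, there is a natural degree-preserving inclusion $\iota\colon\AR^k(P,\partial P)\hookrightarrow\AR^k(P)$ after Artinian reduction. The Lefschetz isomorphism of Theorem~\ref{thm:rellef} factors as
\[\AR^k(P,\partial P)\xrightarrow{\iota}\AR^k(P)\xrightarrow{\cdot\ell^{d+1-2k}}\AR^{d+1-k}(P),\]
so surjectivity of the composition forces the tail multiplication to be surjective, yielding $h^*_k\geq h^*_{d+1-k}$ for all $k\leq(d+1)/2$. For the monotone descent in the second half, factor the same isomorphism instead as
\[\AR^k(P,\partial P)\xrightarrow{\cdot\ell^{d-2k}}\AR^{d-k}(P,\partial P)\xrightarrow{\iota}\AR^{d-k}(P)\xrightarrow{\cdot\ell}\AR^{d+1-k}(P),\]
valid for $k\leq d/2$. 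Surjectivity of the composition again forces $\cdot\ell\colon\AR^{d-k}(P)\to\AR^{d+1-k}(P)$ to be onto, so $h^*_{d-k}\geq h^*_{d+1-k}$; chaining these for $k=0,1,\ldots,\lfloor d/2\rfloor$ gives the claimed monotone descent in the second half. Finally, the unimodality of Stapledon's $a$-polynomial is an immediate consequence of these two families of inequalities via the symmetric decomposition introduced in \cite{Stapledon}, whose very design translates precisely the monotone descents and the inequalities $h^*_k\geq h^*_{d+1-k}$ into unimodality of $a$.

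The main technical subtlety will be ensuring that $\iota$ remains injective after passing to the perverted Artinian reduction described in the final clause of Theorem~\ref{thm:rellef}. This is guaranteed by that clause itself, which arranges for the l.s.o.p.\ to remain a regular sequence on the canonical module in addition to the algebra, so that modding out does not collapse the submodule structure. Once this compatibility is in place, the remainder is a bookkeeping exercise in chasing surjections through two factorizations of the same Lefschetz isomorphism.
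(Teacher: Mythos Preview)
Your derivation of the two families of inequalities is correct and matches the paper's approach: both extract the surjections
\[
\AR^k(P)\xrightarrow{\cdot\ell^{d+1-2k}}\AR^{d+1-k}(P)\quad\text{and}\quad \AR^{d-k}(P)\xrightarrow{\cdot\ell}\AR^{d+1-k}(P)
\]
by factoring the relative Lefschetz isomorphism of Theorem~\ref{thm:rellef} through $\AR^*(P)$.

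Two remarks. First, your final paragraph worries about injectivity of $\iota\colon\AR^*(P,\partial P)\to\AR^*(P)$ after Artinian reduction, and even asserts it (writing $\hookrightarrow$). In fact this map is \emph{not} injective in general: from the short exact sequence $0\to\fld^*[P,\partial P]\to\fld^*[P]\to\fld^*[\partial P]\to 0$, the kernel of $\iota$ after reducing by $d+1$ parameters is the $\theta_d$-torsion of the Artinian reduction of $\fld^*[\partial P]$, which is nonzero whenever $\partial P$ contributes. Fortunately your argument never uses injectivity; you only use that the Lefschetz isomorphism factors through $\AR^{d-k}(P)$ (resp.\ $\AR^k(P)$), forcing the tail map to be surjective. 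So the concern is moot and the claimed injection should simply be dropped.

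Second, for the Stapledon part you take a different route from the paper. The paper observes that $a(t)=h^*_{\partial P}(t)$ and then invokes the sphere Lefschetz theorem (Theorem~\ref{thm:lefsphere}) for $\partial P$. Your numerical route is also valid and more elementary: one checks directly (e.g.\ from $(1-t)h^*_{\partial P}(t)=h^*_P(t)-t^{d+1}h^*_P(1/t)$) that $a_i-a_{i-1}=h^*_i-h^*_{d+1-i}$, so the single family $h^*_k\ge h^*_{d+1-k}$ already yields unimodality of $a$ by symmetry; the monotone descent is not needed here. The paper's route has the advantage of giving a Lefschetz-level explanation (and the $M$-vector strengthening), while yours avoids appealing to the stronger Theorem~\ref{thm:lefsphere}.
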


The last part also follows from Theorem~\ref{thm:lefsphere}, by observing that the $a$-polynomial corresponds exactly to the $h^*$-polynomial of $\partial P$ as a lattice complex.

In addition, Theorem \ref{thm:rellef} has interesting consequences if we know at what height the interior of $\cone^\circ(P)$ is generated. Suppose all minimal elements of $\cone^\circ(P)$ are of heights at most~$j$. Then, for $k\leq \lceil\frac{d-j}{2}\rceil$, every nontrivial $u$ in $\mathcal{A}^k(P)$ multiplies with one of these elements, say $\x_\alpha$. 
 Hence in the composition
\[\mathcal{A}^k(P) \ \xhookrightarrow{\ \cdot \x_\alpha\ }  \ \mathcal{A}^{k+j}(P,\partial P) \xrightarrow{\ \cdot \ell^{d+1-2k-2j}\ } \mathcal{A}^{d+1-k-j}(P),\]
the element $u$ is nontrivial. Hence $ \ell^{d+1-2k-2j} u$ is nontrivial.

\begin{cor}
The $h^\ast$-polynomial of an IDP lattice polytope of dimension $d$ with $\cone^\circ(P)$ generated at height $\leq j$ has monotone increasing coefficients in the initial part, i.e. 
\[h^{\ast}_0\leq\ldots\leq h^{\ast}_{\lceil\frac{d-j}{2}\rceil}.\]
Moreover, for $k\leq \lceil\frac{d-j}{2}\rceil$ we have 
\[h_k^*\leq h^*_{d+1-j-k}.\]
\end{cor}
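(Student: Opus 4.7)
The plan is to expand the outline sketched in the paper immediately before the corollary into a complete proof. The central idea is to combine the relative Lefschetz isomorphism from Theorem~\ref{thm:rellef} with the Gorenstein pairing on the canonical module $\fld^*[P,\partial P]$ and the hypothesis that $\cone^\circ(P)$ is generated, as an $\fld^*[P]$-module, by lattice points of height at most $j$.

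First I would establish the pointwise nonvanishing claim: for every nonzero $u\in\mathcal{A}^k(P)$ there is a minimal interior generator $\x_\alpha$ of height $h(\alpha)\leq j$ such that $\x_\alpha u\neq 0$ in $\mathcal{A}^{k+h(\alpha)}(P,\partial P)$. Indeed, the perfect pairing
\[\mathcal{A}^k(P)\times\mathcal{A}^{d+1-k}(P,\partial P)\longrightarrow \mathcal{A}^{d+1}(P,\partial P)\cong \fld\]
(valid after the generic Artinian reduction) produces a partner $v$ with $u\cdot v\neq 0$, and expanding $v=\sum_\alpha \x_\alpha g_\alpha$ along the minimal interior generators, with $g_\alpha\in\fld^*[P]$, forces one summand $u\cdot\x_\alpha$ to be nonzero. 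Since $k+h(\alpha)\leq\lceil(d+1)/2\rceil$ in the stated range of $k$, Theorem~\ref{thm:rellef} then supplies the isomorphism $\mathcal{A}^{k+h(\alpha)}(P,\partial P)\xrightarrow{\,\cdot\,\ell^{d+1-2k-2h(\alpha)}\,}\mathcal{A}^{d+1-k-h(\alpha)}(P)$, so $\ell^{d+1-2k-2h(\alpha)}\x_\alpha u\neq 0$. Reassociating as $\x_\alpha(\ell^{d+1-2k-2h(\alpha)}u)$ reveals that $\ell^{d+1-2k-2h(\alpha)}u\neq 0$ in $\mathcal{A}^{d+1-k-2h(\alpha)}(P)$, and hence, by the injectivity of $\cdot\,\ell$ below the middle (a standard consequence of Lefschetz), $\ell u\neq 0$ in $\mathcal{A}^{k+1}(P)$. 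The resulting injection $\mathcal{A}^k(P)\hookrightarrow\mathcal{A}^{k+1}(P)$ gives the monotonicity $h^\ast_0\leq\cdots\leq h^\ast_{\lceil(d-j)/2\rceil}$.

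For the second inequality $h^\ast_k\leq h^\ast_{d+1-j-k}$ I would upgrade the pointwise nonvanishing into injectivity of the single linear map $\mathcal{A}^k(P)\xrightarrow{\,\cdot\,\ell^{d+1-2k-j}\,}\mathcal{A}^{d+1-k-j}(P)$. Starting from the nonvanishing of $\ell^{d+1-2k-2h(\alpha)}u$, one compares against the desired exponent $d+1-2k-j$: when $j-2h(\alpha)\geq 0$ the identity $\ell^{d+1-2k-2h(\alpha)}u=\ell^{j-2h(\alpha)}\cdot\ell^{d+1-2k-j}u$ automatically forces $\ell^{d+1-2k-j}u\neq 0$, while when $j-2h(\alpha)<0$ one writes $\ell^{d+1-2k-j}u=\ell^{2h(\alpha)-j}\cdot\ell^{d+1-2k-2h(\alpha)}u$ and appeals again to the injectivity of $\cdot\,\ell$ on $\mathcal{A}^*(P)$ in the appropriate range below the middle, itself a consequence of Theorem~\ref{thm:rellef}. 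I expect this case analysis to be the main obstacle: one must verify that for every $h(\alpha)\in\{1,\ldots,j\}$ that can actually arise, the intermediate source degree $d+1-k-2h(\alpha)$ still lies in the Lefschetz-injective range, and it is precisely this bookkeeping that produces the threshold $k\leq\lceil(d-j)/2\rceil$ in the statement.
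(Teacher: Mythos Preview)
Your approach is the same as the paper's: find, via the Poincar\'e pairing and the generation hypothesis on $\cone^\circ(P)$, a minimal interior generator $\x_\alpha$ with $u\x_\alpha\neq 0$, apply the relative Lefschetz isomorphism of Theorem~\ref{thm:rellef} to $\x_\alpha u$, and then strip off $\x_\alpha$. You are in fact more careful than the paper, which silently writes $h(\alpha)=j$; your distinction between $h(\alpha)$ and the bound $j$, and the resulting case split $j\gtrless 2h(\alpha)$, is exactly the right thing to track.

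There is however one genuine slip. You twice invoke ``injectivity of $\cdot\,\ell$ on $\mathcal{A}^*(P)$ below the middle'' as a consequence of Theorem~\ref{thm:rellef}. The relative Lefschetz theorem gives injectivity of $\ell$ on $\mathcal{A}^*(P,\partial P)$ and surjectivity on $\mathcal{A}^*(P)$ above the middle, but \emph{not} injectivity on $\mathcal{A}^*(P)$ in low degrees---that is precisely the monotonicity you are proving. In your first use (deducing $\ell u\neq 0$ from $\ell^{d+1-2k-2h(\alpha)}u\neq 0$) this is harmless: the implication $\ell^{N}u\neq 0\Rightarrow \ell u\neq 0$ is trivial for $N\ge 1$, no Lefschetz needed. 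But in the case $2h(\alpha)>j$ of the second inequality you write
\[
\ell^{d+1-2k-j}u\ =\ \ell^{\,2h(\alpha)-j}\cdot\ell^{\,d+1-2k-2h(\alpha)}u
\]
and then need that $\ell^{2h(\alpha)-j}$ does not kill the nonzero element $\ell^{d+1-2k-2h(\alpha)}u\in\mathcal{A}^{d+1-k-2h(\alpha)}(P)$. That is an injectivity statement on $\mathcal{A}^*(P)$ which does not follow from Theorem~\ref{thm:rellef}; appealing to it here is circular. So the ``bookkeeping'' you flag at the end is not merely arithmetic---this case needs an additional argument (for instance, staying inside $\mathcal{A}^*(P,\partial P)$, or running an induction on $k$). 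The paper's sketch dodges this by tacitly setting $h(\alpha)=j$, which puts everything in your first case.
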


For the finer statements regarding the local $h^{\ast}$-vector, we need a more powerful algebraic statement. To this end, notice that given any abstract polytopal complex $\Sigma$ consisting of lattice polytopes (short lattice complex), with the property that the lattices coincide in common intersections, we obtain an analogous Cohen-Macaulay ring $\fld^*[\Sigma]$ \cite{BBR}. In analogy with the proofs of the g-theorem of \cite{AHK,PP,APP}, we have

\begin{thm}\label{thm:lefsphere}
If $\Sigma$ is an IDP lattice sphere or ball (that is, a polyhedral sphere or ball with lattice structure in which every element has the IDP property) of dimension $d$, and the characteristic of $\fld$ is $2$ or $0$, then a generic Artinian reduction $\AR^*(\Sigma)$ of  $\widetilde{\fld}^*[\Sigma]$ has the Lefschetz property
\[\AR^k(\Sigma,\partial \Sigma)\ \xrightarrow{\ \cdot \ell^{d-2k}\ }\ \AR^{d+1-k}(\Sigma).\]
\end{thm}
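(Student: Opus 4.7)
The plan is to adapt the anisotropy machinery of \cite{AHL,PP,APP} to the semigroup algebra of a lattice complex, using the normalized fundamental class and the Parseval-type identity developed in Sections~\ref{sec:deg} and~\ref{sec:parseval}. The argument runs parallel to Theorem~\ref{thm:rellef}, with the new feature that one works globally on an abstract lattice complex $\Sigma$, so the proof is organized as an induction on dimension, propagated through star and link decompositions.

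First I would fix the Poincar\'e duality pair
\[\AR^k(\Sigma,\partial\Sigma)\ \times\ \AR^{d+1-k}(\Sigma)\ \longrightarrow\ \AR^{d+1}(\Sigma,\partial\Sigma)\ \cong\ \widetilde{\fld}\]
arising from Hochster's description of the canonical module, after passing to the generic \emph{perverted} Artinian reduction with transcendentally independent coefficients $\theta_{i,p}$. The perverted choice is essential: it guarantees that the restriction of the system of parameters to every star and link is again a system of parameters there, so that inductive hypotheses on lower-dimensional subcomplexes are directly applicable.

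The core step is anisotropy of the Lefschetz pairing in characteristic $2$: for a generic linear form $\ell=\sum_{p}\ell_p x_p$, the quadratic form
\[u\ \longmapsto\ \int_\Sigma u^2\cdot\ell^{d-2k}\]
on $\AR^k(\Sigma,\partial\Sigma)$ is anisotropic. By the squaring trick of \cite{APP}, valid in characteristic $2$, anisotropy is equivalent to injectivity of multiplication by $\ell^{d-2k}$, and Poincar\'e duality between matching graded dimensions then upgrades injectivity to the required isomorphism with $\AR^{d+1-k}(\Sigma)$. Using the normalized fundamental class of Section~\ref{sec:deg}, the integral above becomes an explicit differential operator in the $\ell_p$'s applied to the volume polynomial $\int_\Sigma\ell^d$; the Parseval identity of Section~\ref{sec:parseval} then rewrites this as a sum of squared partial derivatives of $u$, whose nonvanishing amounts, by Macaulay's inverse system, to the statement that $u\neq 0$. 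The characteristic zero case follows from characteristic $2$ by the usual semicontinuity of graded dimensions in the transcendental parameters.

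I expect the main obstacle, beyond what appears in \cite{AHL,PP,APP}, to be making the normalized fundamental class compatible with the inductive restriction to stars and links. The difficulty is that $\fld^*[\Sigma]$ is cut out by binomial relations rather than monomial ones, so Hochster's canonical module admits no canonical generator; the normalization of Section~\ref{sec:deg} must be chosen so that the Parseval identity closes up on each lower-dimensional face, allowing the induction to feed itself. Once this compatibility is in place, anisotropy on $\Sigma$ decomposes into the inductive anisotropy on each link together with the global Parseval identity, completing the argument.
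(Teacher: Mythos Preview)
Your overall architecture (anisotropy in characteristic $2$ $\Rightarrow$ injectivity of $\ell^{d-2k}$ $\Rightarrow$ isomorphism by Poincar\'e duality, then lift to characteristic $0$) matches the paper. But the reduction you sketch diverges from what the paper actually does, and the divergence leaves a gap.

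The paper does \emph{not} prove anisotropy of the Lefschetz form $u\mapsto\deg(u^{2}\ell^{d-2k})$ directly, and the Parseval identity of Section~\ref{sec:parseval} cannot do so: as stated it computes $\deg(u^{2})$ only when $u$ sits in the middle degree $2k=d+1$, with no $\ell$ present. Instead, the paper invokes Lemma~\ref{lem:midred}: the Hall--Laman relations for an ideal $\mathcal{I}\subset\AR^{\ast}(\Sigma)$ in degree $k$ with respect to $\ell$ are equivalent to those for $\mathcal{I}\cdot\x_{\mathbf{n}}$ in $\AR^{\ast}(\susp\Sigma)$ in degree $k+1$ with respect to $\x_{\mathbf{n}}$. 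Iterating the suspension raises the dimension until one is in the middle degree of an odd-dimensional lattice sphere or ball, where the bare anisotropy of Theorem~\ref{thm:ani} applies and the $\ell$ has been absorbed into the suspension coordinates. Your star/link induction goes the other direction (decreasing dimension) and would require an anisotropy statement for the Lefschetz form itself on each link, which Sections~\ref{sec:deg}--\ref{sec:parseval} do not supply.

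Two further mismatches to flag. First, the differential identity (Lemma~\ref{lem:diffid}) differentiates in the parameters $\theta_{i,p}$ of the Artinian reduction, not in the coefficients $\ell_{p}$ of the Lefschetz element; the volume-polynomial picture you describe (varying $\ell$, Macaulay inverse system) is not what is used here. Second, the passage from a single IDP polytope to a lattice sphere or disc is not handled by a star/link decomposition at all: the paper simply observes that any monomial of $\widetilde{\fld}^{\ast}[\Sigma]$ is supported on a single polytopal face, so the integration-map computation and the Parseval identity reduce termwise to the polytope case, with the global anisotropy following by linearity. The compatibility issue you anticipate (matching the normalized fundamental class along links) therefore never arises.
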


Similar results hold for manifolds and cycles (again in direct analogy to \cite{APP}) but seem less immediately relevant here. 

For the case that $\Sigma$ is a sphere, we of course have $\AR^*(\Sigma,\partial)=\AR^*(\Sigma)$, letting us consider the standard graded algebra $\bigslant{\mathcal{A}^\ast(\Sigma)}{\ell \mathcal{A}^\ast(\Sigma)}$. We denote the dimensions of the graded pieces by $g_k^*$.  

For any polytope $P$ with the integer decomposition property, we can now combine the relative property for $P$ with the Lefschetz property for the boundary sphere $\partial P$. 

Consider to that effect the exact sequence \[\widetilde{\fld}^*[P,\partial P] \ \longrightarrow \ \widetilde{\fld}^*[P] \ \longrightarrow \ \widetilde{\fld}^*[\partial P] \ \longrightarrow \  0.\]

In order to pass to a statement on the Artinian reductions, we take a perverted linear system of parameters for $P$, ensuring that the restriction to $\partial P$ is again a proper linear system of parameters. With the last element $\theta_d$ acting as Lefschetz element for $\AR^*(\partial P)$, we get the short exact sequence

\[ 0 \ \longrightarrow \ \im\Big(\mathcal{A}^*(P,\partial P) \to \mathcal{A}^*(P)\Big) \  \longrightarrow \ \AR^*(P) \ \longrightarrow \ \bigslant{\mathcal{A}^\ast(\partial P)}{\theta_d \mathcal{A}^\ast(\partial P)} \ \longrightarrow \ 0.\]

We want to use this observation to analyze the local $h^*$-polynomial $\ell^*(t)$ (also called the Box polynomial or $\tilde{S}$-polynomial) considered by Betke-McMullen, Stanley, and others \cite{BM,Stanleysub,BM03,KatzStapledon}. 
From Stanley \cite{Stanleysub}[Theorem~7.8, Example~7.13], we get for any Cohen-Macaulay lattice complex $\Sigma$ that
\[ h_\Sigma^*(t) \ = \sum_{\emptyset\leq F\in\Sigma} \ell_F^*(t)\cdot h_{[F,\Sigma]}(t) \  = \sum_{\emptyset\leq F\in\Sigma} \ell_F^*(t)\cdot g_{[F,\Sigma)}(t), \]
where the sum goes over all faces of $\Sigma$ and $h_{[F,\Sigma]}(t), g_{[F,\Sigma)}(t)$ are the toric $h$- and $g$-polynomial respectively, of the posets $[F,\Sigma]$ and $[F,\Sigma)$, given as an interval in the face lattice of $\Sigma$.

From this we deduce
\begin{align*}
	\ell^*_P(t) \ &= \ h_P^*(t) \ - \sum_{\emptyset\leq F\in\partial P} \ell_F^*(t)\cdot h_{[F,P]}(t) \\ &= \ h_P^*(t) \ - \sum_{\emptyset\leq F \in \partial P} \ell_F^*(t)\cdot g_{[F,\partial P]}(t) \\
	&= \ h^*_P(t) - g^*_{\partial P}(t), 
\end{align*}
implying that the dimensions of the graded pieces of $\im\Big(\mathcal{A}^*(P,\partial P)\to \mathcal{A}^*(P)\Big)$ are exactly $\ell^*_k$.

The relative Lefschetz map now restricts to $\im\Big(\mathcal{A}^*(P,\partial P)\to \mathcal{A}^*(P)\Big)$ and thus

\begin{thm}
The local $h^\ast$-vector $\ell^{\ast}$ of any IDP polytope is unimodal.
\end{thm}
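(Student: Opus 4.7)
The plan is to deduce the unimodality of $\ell^\ast$ directly from Theorem~\ref{thm:rellef}, by showing that the relative Lefschetz isomorphism restricts to an analogous Lefschetz isomorphism on the ideal $I^\ast \coloneqq \im(\mathcal{A}^\ast(P,\partial P) \to \mathcal{A}^\ast(P))$, whose graded dimensions are precisely the $\ell^\ast_k$ by the discussion preceding the statement. Since $\mathcal{A}^\ast(P,\partial P) \to \mathcal{A}^\ast(P)$ is a homomorphism of $\mathcal{A}^\ast(P)$-modules, $I^\ast$ is in fact a graded ideal of $\mathcal{A}^\ast(P)$, and in particular $\ell^{d+1-2k}\cdot I^k \subseteq I^{d+1-k}$ for every $k$.

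The essential step is, for the Lefschetz element $\ell$ supplied by Theorem~\ref{thm:rellef} and every $k \leq (d+1)/2$, to factor the relative Lefschetz map as
\[
\mathcal{A}^k(P,\partial P) \;\twoheadrightarrow\; I^k \;\xrightarrow{\;\cdot\,\ell^{d+1-2k}\;}\; \mathcal{A}^{d+1-k}(P),
\]
where the first arrow is the canonical surjection onto the image and the second is ordinary multiplication inside $\mathcal{A}^\ast(P)$. Since the total composition is the isomorphism of Theorem~\ref{thm:rellef}, the surjection must also be injective, hence an isomorphism onto $I^k$, and the multiplication $\cdot\,\ell^{d+1-2k}\colon I^k \to \mathcal{A}^{d+1-k}(P)$ must itself be an isomorphism. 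Combining this with the inclusion chain $\ell^{d+1-2k}\cdot I^k \subseteq I^{d+1-k} \subseteq \mathcal{A}^{d+1-k}(P)$ forces $I^{d+1-k} = \mathcal{A}^{d+1-k}(P)$ and upgrades the restriction to an isomorphism $\cdot\,\ell^{d+1-2k}\colon I^k \xrightarrow{\sim} I^{d+1-k}$; this is precisely the Lefschetz property for the ideal $I^\ast$.

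From here the conclusion is immediate. The injectivity of $\cdot\,\ell^{d+1-2k}\colon I^k \to I^{d+1-k}$ for $k<(d+1)/2$ forces $\cdot\,\ell\colon I^k \hookrightarrow I^{k+1}$ to be injective, so $\ell^\ast_k \leq \ell^\ast_{k+1}$ in the initial range; the same isomorphism yields the palindromic identity $\ell^\ast_k = \ell^\ast_{d+1-k}$, and combining these two facts produces the claimed unimodality. The only substantive content lies in the diagram chase establishing the restricted Lefschetz property on $I^\ast$; the remainder is a formal consequence of Theorem~\ref{thm:rellef} together with the ideal structure of $I^\ast$.
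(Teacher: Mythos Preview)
Your proof is correct and follows the same approach as the paper: the paper's argument is the single sentence ``The relative Lefschetz map now restricts to $\im\big(\mathcal{A}^*(P,\partial P)\to\mathcal{A}^*(P)\big)$,'' and your write-up is an accurate unpacking of exactly that claim, including the observation that the factorization through $I^k$ forces $I^{d+1-k}=\mathcal{A}^{d+1-k}(P)$ in the upper range. There is nothing to add or correct.
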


\section{Anisotropy}\label{sec:ani}

To deduce the Lefschetz property, we employ a reduction found in \cite{AHL}, see also \cite{APP}: It is enough to demonstrate a nondegeneracy property of the Poincar\'e pairing at certain ideals. The following theorem then finally implies the Lefschetz theorems.

\begin{thm}\label{thm:ani}
	If $P$ is an IDP $d$-polytope, and the characteristic of $\fld$ is $2$, then a generic Artinian reduction $\AR^*(P, \partial P)$ of  $\widetilde{\fld}^*[P,\partial P]$ has the anisotropy property, that is,
	for every nontrivial $u\in\AR^{k}(P,\partial P)$ of degree $k\le \nicefrac{d}{2}$, we have $u^2\ \neq\ 0$.

	The analogous result applies to lattice discs and spheres.
\end{thm}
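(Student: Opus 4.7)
The plan is to follow the anisotropy strategy developed by Adiprasito \cite{AHL} and Papadakis--Petrotou \cite{PP,APP}, adapting it to the semigroup algebra setting by supplying the two new ingredients announced in the introduction: a canonical normalization of the fundamental class (Section~\ref{sec:deg}) and a Parseval-type identity for this normalization (Section~\ref{sec:parseval}). It suffices to prove that for generic $\ell \in \mathcal{A}^1(P)$ the quadratic form $Q(u) = \int u^2\,\ell^{d+1-2k}$ on $\mathcal{A}^k(P,\partial P)$ is anisotropic, since $Q(u) \neq 0$ immediately forces $u^2 \neq 0$.

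The first step is to pin down the degree map $\int : \mathcal{A}^{d+1}(P,\partial P) \to \widetilde{\fld}$ by selecting a canonical generator of the socle of Hochster's canonical module $\fld^*[P,\partial P]$. The subtlety, absent in the Stanley--Reisner case, is that the ideal defining $\fld^*[P]$ is binomial rather than monomial, so the naive normalization monomial by monomial is incompatible with the semigroup relations. Section~\ref{sec:deg} supplies a normalization that respects these relations and specializes to the Stanley--Reisner normalization of \cite{APP} when $P$ is a unimodular simplex. This yields the volume polynomial $\mathrm{Vol}_P(\theta) = \int \theta^{d+1}$ as an explicit polynomial in the transcendentals $\theta_{i,p}$ of the generic Artinian reduction.

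The second step is the Parseval-type identity of Section~\ref{sec:parseval}. In characteristic $2$ the Frobenius endomorphism is additive, so for $u \in \mathcal{A}^k(P,\partial P)$ written in a monomial basis as $u = \sum_\alpha c_\alpha\,\x^\alpha$ one has $u^2 = \sum_\alpha c_\alpha^{\,2}\,\x^{2\alpha}$. The identity to be proved expresses $Q(u)$ as a sum of squares of differential operators in the transcendentals $\theta_{i,p}$ applied to $\mathrm{Vol}_P$, such that at least one squared summand is a nonzero polynomial in the $\theta_{i,p}$ whenever $u$ is nonzero in the generic Artinian reduction. Since squares in a polynomial ring over a field of characteristic $2$ cannot cancel one another, $Q(u) \neq 0$ follows, giving anisotropy. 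The extension to IDP lattice discs and spheres is formal: the construction of $\fld^*[\Sigma]$ in \cite{BBR} is local on the face structure, and both the normalization of the fundamental class and the Parseval identity glue over the face lattice.

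The main obstacle is proving the Parseval identity in the binomial setting. For Stanley--Reisner rings, monomial relations let one differentiate $\mathrm{Vol}$ combinatorially; for semigroup algebras the binomial relations $\x^\alpha \x^\beta = \x^\gamma \x^\delta$ (for $\alpha+\beta = \gamma+\delta$) must be respected when taking derivatives with respect to the $\theta_{i,p}$. The correct framework, set up in Section~\ref{sec:deg}, is to treat $\int$ as a functional on the free module spanned by interior lattice points modulo the linear relations coming from the boundary, and to verify that formal differentiation in the transcendentals intertwines these relations with the Frobenius squaring on $u$. Once this compatibility is secured, genericity of the Artinian reduction supplies the algebraic independence needed to prevent cancellation among the resulting squares, and anisotropy for $(P,\partial P)$ and for lattice discs/spheres follows by the same argument.
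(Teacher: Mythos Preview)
Your proposal has the right ingredients---normalization of the fundamental class, a Parseval-type identity, characteristic~$2$ squaring---but the mechanism you describe contains two genuine gaps.

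First, your reduction to $Q(u)=\int u^2\,\ell^{\,d+1-2k}$ is in the wrong direction. Introducing the factor $\ell^{\,d+1-2k}$ means you are no longer dealing with a pure square, and the Parseval identity (Lemma~\ref{lem:parseval}) only controls $\deg(u^2)$ for $u$ of middle degree. The paper does not attempt to prove $Q(u)\neq 0$ directly; instead, via the suspension trick (Lemma~\ref{lem:midred}) it reduces everything to the case $2k=d+1$, so that $u^2$ already lands in the socle $\AR^{d+1}(P,\partial P)$ and no $\ell$ is needed. What you are proposing to prove is essentially the Hall--Laman relation, which in the paper's logic is a \emph{consequence} of middle-degree anisotropy, not a route to it.

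Second, and more seriously, your description of the Parseval identity and the way it is used is not correct. You claim $Q(u)$ is expressed as a sum of squares of differential operators applied to $\mathrm{Vol}_P$, and conclude because ``squares in a polynomial ring over a field of characteristic~$2$ cannot cancel one another.'' But in characteristic~$2$ we have $a^2+b^2=(a+b)^2$, so a sum of squares with one nonzero summand can very well vanish. The paper's Parseval identity (Lemma~\ref{lem:parseval}) has a different shape: it writes
\[
\deg(u^2)\ =\ \sum_{\beta}\ \deg\!\big(u\cdot\x_{\beta/2}\big)^2\cdot\prod_i\theta_{i,\beta_i},
\]
a sum of squares \emph{weighted by distinct monomials in the transcendentals}. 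The key step you are missing is the differential identity (Lemma~\ref{lem:diffid}): applying the operator $\partial_F$ kills the square factors (their derivative is $0$ in characteristic~$2$) and isolates a \emph{single} term,
\[
\partial_F\deg(u^2)\ =\ \deg(u\cdot\x_F)^2.
\]
Now Poincar\'e duality supplies an $F$ with $\deg(u\cdot\x_F)\neq 0$, so $\partial_F\deg(u^2)\neq 0$, hence $\deg(u^2)$ is not identically zero as a rational function in the $\theta_{i,p}$, and therefore nonzero at the transcendentally independent point. This differentiation step is the crux of the argument and is absent from your sketch.
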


Most of the remainder of this paper is devoted to proving this theorem. Before we do that, however, let us note that from here, the Lefschetz property is a cakewalk. For sake of completeness, we remind the reader of the derivation of the Lefschetz property from anisotropy, based on \cite{AHL}. 

Consider an IDP lattice sphere of dimension $d-1$. We say that $\AR^\ast(\Sigma)$, with socle degree $d$, satisfies the \Defn{Hall-Laman relations} in degree $k\le \frac{d}{2}$ and for ideal $\mathcal{I}^\ast\subset \AR^\ast(\Sigma)$ if there exists an $\ell$ in $\AR^1(\Sigma)$, such that the pairing
	\begin{equation}\label{eq:sl}
	\begin{array}{ccccc}
	\mathcal{I}^k& \times &\mathcal{I}^k & \longrightarrow &\ \mathcal{I}^d\cong \fld \\
	a	&		& b& {{\xmapsto{\ \ \ \ }}} &\ \mr{deg}(ab\ell^{d-2k})
	\end{array}
	\end{equation}
	is nondegenerate.
	
Note that that the Hall-Laman relations for an ideal $\AR^\ast(\Delta,\partial \Delta)$ corresponding to a disk~$\Delta$ of dimension $d$ in $\Sigma$ are precisely the relative Lefschetz property of that disk; hence the Hall-Laman relations give us a way to discuss relative Lefschetz properties properly.
	
	To in turn prove the Hall-Laman relations, consider the suspension $\susp \Sigma$ of the lattice disc or sphere $\Sigma$, which is again IDP. Note that $\Sigma$ itself is a face of $\susp\Sigma$, so two points with one in each of the cones over $\Sigma$ do not lie in a common face. On the level of semigroup algebras, a suspension corresponds to the introduction of two variables which multiply to zero. Label the two vertices (corresponding to the newly introduced indeterminates) of the suspension $\mbf{n}$ and $\mbf{s}$ (for north and south). Let $\uppi$ denote the projection along $\mbf{n}$, and let $\tet$ denote the height over that projection.

	\begin{lem}[{\cite[Lemma 7.5]{AHL}}]\label{lem:midred}
Considering $\susp\Sigma$ realized in $\fld^{d+1}$, and $k< \frac{d}{2}$, the following two are equivalent:
		\begin{compactenum}[(1)]
					\item The Hall-Laman relations in degree $k+1$ for $\IR\cdot \x_{\mbf{n}}$ in $\AR^{k+1}(\susp \Sigma)$
					with respect to $\x_{\mbf{n}}$.									
			\item The Hall-Laman relations in degree $k$ for 
 $\IR$	in $\AR^k(\Sigma)$
with respect to $\tet$. 
		\end{compactenum}
	\end{lem}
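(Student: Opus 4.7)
The plan is to identify the ideal $\IR\cdot\x_{\mbf{n}}$ at degree $k+1$ in $\AR^{*}(\susp\Sigma)$ with the ideal $\IR$ at degree $k$ in $\AR^{*}(\Sigma)$, in a way that takes the Hall-Laman pairing in (1) to the pairing in (2) up to a nonzero scalar. The starting geometric observation is that in $\susp\Sigma$, the apex $\mbf{n}$ and its antipode $\mbf{s}$ do not share a common face, so every lattice generator of the open star of $\mbf{s}$ annihilates $\x_{\mbf{n}}$ in $\fld^*[\susp\Sigma]$. Consequently $\x_{\mbf{n}}\cdot\fld^*[\susp\Sigma]$ is supported on the closed star of $\mbf{n}$, which is a cone over $\Sigma$ with apex $\mbf{n}$.

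The next step is to choose the generic l.s.o.p.\ of $\fld^*[\susp\Sigma]$ in a perverted manner: $d$ of its $d+1$ parameters restrict to a generic l.s.o.p.\ of $\fld^*[\Sigma]$, while the remaining parameter breaks the symmetry between $\mbf{n}$ and $\mbf{s}$. With this choice, multiplication by $\x_{\mbf{n}}$ induces a degree-shifting isomorphism
\[\phi:\ \AR^k(\Sigma)\ \xrightarrow{\ \sim\ }\ \x_{\mbf{n}}\AR^{k+1}(\susp\Sigma),\qquad a\ \longmapsto\ a\cdot\x_{\mbf{n}},\]
valid in every degree $k$ up to the socle degree $d$ of $\AR^*(\Sigma)$. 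The reason this works is that the Artinian reduction of the cone $\St(\mbf{n})$ is obtained from that of its link $\Sigma$ by adjoining the extra variable $\x_{\mbf{n}}$, and this extra variable acts on $\AR^*(\Sigma)$ through the height linear form $\tet$. Under $\phi$, multiplication by $\x_{\mbf{n}}$ on the suspension side therefore corresponds exactly to multiplication by $\tet$ on the $\Sigma$ side, and $\IR\cdot\x_{\mbf{n}}=\phi(\IR)$.

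With these in hand, the pairing in (1) evaluated on $(a\x_{\mbf{n}},b\x_{\mbf{n}})$ is $\mr{deg}(ab\cdot\x_{\mbf{n}}^{d-2k+1})$ in $\AR^{d+1}(\susp\Sigma)$. Absorbing one factor of $\x_{\mbf{n}}$ into the socle identification $\x_{\mbf{n}}\AR^{d}(\Sigma)\cong\AR^{d+1}(\susp\Sigma)$ provided by $\phi$, and using $\phi$ to translate the remaining $\x_{\mbf{n}}^{d-2k}$ into $\tet^{d-2k}$, the pairing matches $\mr{deg}(ab\cdot\tet^{d-2k})$ in $\AR^{d}(\Sigma)$, which is exactly the pairing in (2). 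Hence (1) and (2) are simultaneously nondegenerate, which is what the equivalence asserts.

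The main obstacle is the construction of the perverted l.s.o.p.\ in the middle paragraph: one has to verify that a generic l.s.o.p.\ of $\fld^*[\susp\Sigma]$ can indeed be arranged so that it restricts to a generic l.s.o.p.\ of $\fld^*[\Sigma]$, and simultaneously so that the action of $\x_{\mbf{n}}$ on $\x_{\mbf{n}}\AR^*(\susp\Sigma)$ matches the action of $\tet$ on $\AR^*(\Sigma)$ on the nose. This requires controlling depths along each face of the suspension and is the technical heart of the lemma; once secured, the translation of Hall-Laman relations is purely formal.
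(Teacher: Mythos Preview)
The paper does not supply its own proof of this lemma; it is quoted verbatim from \cite[Lemma~7.5]{AHL} and used as a black box, so there is nothing in the present paper to compare your argument against. That said, your outline is precisely the standard cone-lemma argument behind the cited result: the ideal $(\x_{\mbf n})$ in $\AR^\ast(\susp\Sigma)$ is supported on $\St(\mbf n)=\mbf n\ast\Sigma$ because $\x_{\mbf n}\x_{\mbf s}=0$, and the Artinian reduction of the cone over $\Sigma$ is isomorphic to $\AR^\ast(\Sigma)$ with $\x_{\mbf n}$ sent to the height form~$\tet$; the Hall--Laman pairings then match up to a nonzero scalar coming from the identification of socles.

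One small notational point: your displayed target $\x_{\mbf n}\AR^{k+1}(\susp\Sigma)$ literally sits in degree $k+2$. You mean the degree-$(k{+}1)$ part of the principal ideal $(\x_{\mbf n})$, i.e.\ $\x_{\mbf n}\,\AR^{k}(\susp\Sigma)$; with that fix the map $a\mapsto a\cdot\x_{\mbf n}$ lands where it should. Your closing caveat is also well placed: the only genuine work is arranging the l.s.o.p.\ on $\susp\Sigma$ so that it restricts to one on $\Sigma$ and so that the last parameter forces $\x_{\mbf n}\equiv\tet$ on the link; this is exactly the ``perverted'' choice discussed in the paper around Theorem~\ref{thm:rellef}, and once granted the rest is formal.
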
	

This reduces the Lefschetz property iteratively to a pairing property for $d+1=2k$ and such that
\begin{equation*}
\begin{array}{ccccc}
\mathcal{J}^k& \times &\mathcal{J}^k & \longrightarrow &\ \mathcal{J}^d\cong \fld \\
a	&		& b& {{\xmapsto{\ \ \ \ }}} &\ \mr{deg}(ab)
\end{array}
\end{equation*}
is nondegenerate on some $d$-dimensional lattice disc or sphere. But this is guaranteed for any $\mathcal{J}$ by the anisotropy property of Theorem~\ref{thm:ani}. The result for spheres and discs follows simply by linearity, as each monomial lies in a polytopal face.

We are left with the task of proving Theorem~\ref{thm:ani} in the case when $2k$ equals the dimension $d$ of the polytope plus one, so that 
\[\AR^{2k}(P,\partial P)\ \cong\ \widetilde{\fld}.\]

Given the Lefschetz property over \emph{some} field of characteristic $2$, note that the Lefschetz property over \emph{any} field of characteristic $2$ or $0$ immediately follows: The injectivity of the multiplication map is a condition on the determinant of a linear map. If it does not vanish in characteristic $2$, it cannot vanish in characteristic $0$.

\section{Normalization of the integration map}\label{sec:deg}

We now want to prove anisotropy for a general IDP lattice polytope $P$ relative to its boundary, so that in a generic Artinian reduction, $\AR^{d+1}(P,\partial P)\cong\tilde{\fld}$ is the fundamental class. As this is a statement about elements of the fundamental class not vanishing, it serves us well to find a way to make this last isomorphism explicit. We denote the resulting map by $\deg: \AR^{d+1}(P,\partial P)\to\tilde{\fld}$. It is usually called the degree map, but to avoid confusion with the degree of a monomial, we shall instead call this identification the \emph{integration map} (alluding to the fact that what we are aiming to understand is actually the volume polynomial).

One curiosity of lattice polytopes is that even though we know the canonical module thanks to Hochster, we do not actually know of an identification of the top degree with the base field, that is, we do not have a canonical choice of integration map. The proof of Theorem~\ref{thm:ani} is reliant on understanding this integration. In the situation of classical algebraic geometry, there is a canonical such identification, which leads to a classical combinatorial formula in toric geometry \cite{Brion}. In our case, no such canonical identification seems to have been explored. We provide it here.

Stopping short of a full combinatorial formula, we contend ourselves with determining it uniquely. As this is an isomorphism between to copies of the same field, it suffices to give one nontrivial affine condition and prove consistency in order to determine the function.

If $P$ has a unimodular boundary facet $\tau$ in $\partial P$, then we obtain the desired normalization by matching the face ring picture and setting 
\begin{equation}\label{eq:normie}
1\ =\ \sum_{p\in (P\setminus\tau)\cap\Lambda} \deg(\x_p\x_\tau) \det(\Theta_{|\tau,p}) 
\end{equation}
where $\Theta=(\theta_{i,j})$ is the matrix of coefficients in the linear system of parameters. In general, consider a flag $(\tau_i)$ of faces of $P$ such that $\tau_d=P$ and such that $\tau_i$ is a facet of $\tau_{i+1}$. We say a set $\sigma=\{\sigma_0,\ldots,\sigma_d\}\subset(P\cap\Lambda)^{d+1}$ without repetitions is \Defn{coherent} with $(\tau_i)$ if it intersects $\tau_i$ in a set of cardinality $i+1$. We then normalize by setting
\begin{equation}\label{eq:norm2}
1\ =\ \sum_{\sigma \text{ coherent with } (\tau_i)} \deg(\x_\sigma) \det(\Theta_{|\sigma}) 
\end{equation}

\begin{lem}\label{lem:transfer}
The normalization is independent of the flag $(\tau_i)$ chosen.
\end{lem}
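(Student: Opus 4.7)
The plan is to reduce to invariance under elementary moves on the flag complex of $P$, then verify invariance under each such move using the Artinian relations. The flag complex---with vertices the complete flags $(\tau_0\subset\cdots\subset\tau_d)$ and edges the elementary moves replacing a single intermediate $\tau_i$ by $\tau_i'$ with $\tau_{i-1}\subset\tau_i,\tau_i'\subset\tau_{i+1}$ and $\tau_i\cap\tau_i'=\tau_{i-1}$---is connected by a standard argument on polytopal face lattices, so it suffices to show that
\[
E(\tau)\ :=\ \sum_{\sigma\text{ coherent with }\tau}\det(\Theta_{|\sigma})\x_\sigma\ \in\ \mathcal{A}^{d+1}(P,\partial P)
\]
is unchanged under any such move.

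For an elementary move at position $i$, I would parameterize coherent $(d+1)$-subsets by writing $\sigma=\rho\sqcup\eta\sqcup\{p,q\}$ with $\rho:=\sigma\cap\tau_{i-1}$, $\eta:=\sigma\cap(P\setminus\tau_{i+1})$, and $\{p,q\}:=\sigma\cap(\tau_{i+1}\setminus\tau_{i-1})$. Setting $A:=(\tau_i\setminus\tau_{i-1})\cap\Lambda$, $A':=(\tau_i'\setminus\tau_{i-1})\cap\Lambda$ (disjoint, since $\tau_i\cap\tau_i'=\tau_{i-1}$) and $R:=(\tau_{i+1}\setminus\tau_{i-1})\cap\Lambda\setminus(A\cup A')$, a short combinatorial check shows that $\sigma$ is coherent with $\tau$ but not $\tau'$ iff $\{p,q\}\in A\times R$, symmetrically for $\tau'$ only, and with both iff $\{p,q\}\in A\times A'$. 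Thus $E(\tau)-E(\tau')$ is supported on pairs in the symmetric difference; expanding $\det(\Theta_{|\sigma})$ multilinearly along the $p$-column gives cofactors $c_j(\rho,\eta,q)$ independent of $p$, and applying the Artinian identity $\sum_{p\in P\cap\Lambda}\theta_{j,p}\x_p=0$ rewrites $\sum_{p\in A\cup A'}\theta_{j,p}\x_p$ as a (signed) sum over $p\in\tau_{i-1}\cup R\cup(P\setminus\tau_{i+1})$. The rewriting is cleanest in characteristic $2$, and the characteristic $0$ version follows by tracking signs through the Laplace expansion.

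The remaining task is to show that each resulting monomial $\x_\sigma$ vanishes in $\mathcal{A}^{d+1}(P,\partial P)$: either $\sigma$ contains a repeated lattice point (so $\det(\Theta_{|\sigma})=0$ by a repeated column) or $\sigma$ lies in a facet-defining hyperplane, placing $\sum_i\sigma_i$ on the boundary of $\cone(P)$. The backbone here is that coherent $\sigma$'s are themselves affinely independent---a direct consequence of $\aff(\tau_{j-1})\cap\tau_j=\tau_{j-1}$ at each level of the flag---so deviations from coherence produce exactly these degeneracies. The main obstacle will be the subcase $p\in R$, where $\{p,q\}\subset R\subset\tau_{i+1}$ and containment of $\sigma$ in a facet depends on $\eta$ and on the geometry of facets through $\tau_{i+1}$; here the perverted linear system of parameters plays its central role, since its restriction-compatibility to faces permits an inductive appeal to the flag-independence statement inside $\tau_{i+1}$, reducing the delicate subcase to a lower-dimensional instance of the same lemma.
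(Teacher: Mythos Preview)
Your direct computational route via elementary flag moves is quite different from the paper's argument, which avoids all such bookkeeping: it embeds $P$ in the lattice complex $(v\ast\partial P)\cup P$, observes that any integration map on $P$ extends uniquely to this Poincar\'e duality complex, and notes that on the simplicial facets of $v\ast\partial P$ the integration is pinned down by the classical toric formula $\deg(\x_X)=\det^{-1}(\Theta_{|X})$; consistency is then forced. Iterating the coning handles the non-simplicial boundary case.

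Your approach can in fact be made to work, but the mechanism you invoke for the key vanishing is wrong. After the Artinian rewrite sends $p$ into $\tau_{i-1}\cup R\cup(P\setminus\tau_{i+1})$, the individual surviving terms $\det(\Theta_{|\sigma})\,\x_\sigma$ do \emph{not} in general vanish: for $p\in\tau_{i-1}\setminus\rho$, say, there is no repeated column (so $\det\neq 0$, as $\Theta$ is generic and affine dependence of the lattice points is irrelevant to this determinant), and $\sigma$ has no reason to lie in a single boundary facet. What actually kills $E(\tau)-E(\tau')$ is a \emph{pairwise cancellation} coming from the antisymmetry of $\det$. If the rewritten $p$ lands in the flag-layer $\tau_j\setminus\tau_{j-1}$, swap it with the unique point of $\rho$ already in that layer (for $j<i$), with $q$ (for $p\in R$), or with the corresponding point of $\eta$ (for $j>i+1$); since each of those indices is summed freely over its own layer, the swapped configuration is another term of your sum, with the same monomial and determinant of opposite sign. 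This disposes of all three subcases uniformly---including your ``main obstacle'' $p\in R$, which cancels directly against the term with $p$ and $q$ exchanged---so no inductive descent into $\tau_{i+1}$ is needed and the perverted linear system plays no role here. You should also fix a column ordering on $\Theta_{|\sigma}$ induced by the flag to make the sign-tracking honest in characteristic~$0$.
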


\begin{proof}
A direct computational proof is rather uninformative, but we can use an indirect argument without getting our fingers dirty that informs what is really happening. Let us assume first that $P$ has a boundary consisting of unimodular simplices. Consider then the cone over $P$ as a polytope; it follows that 
\[(v\ast \partial P)\ \cup\  P\]
is a lattice complex in the sense of \cite{BBR}, and that it defines a Poincar\'e duality algebra, meaning that the integration map on $P$ extends uniquely to a integration map on $(v\ast \partial P) \cup P$. For the simplices of $v\ast \partial P$ the integration map is uniquely determined and given by $\det^{-1}(\Theta_{|X})$ for a simplex $X$. The consistency follows.

For an arbitrary polytope $P$, we argue in a similar fashion. However, we can use the fact that the faces of $v\ast \partial P$ are decidedly more simplicial than $P$: they are cones over polytopes of codimension one. We can repeat that trick and obtain
double cones over arbitary polytopes as new faces, and so on; finally the facets are simplicial and we once again understand the integration map in terms of smooth toric varieties. This implies consistency of the integration map conditions.
\end{proof}

In other words, the normalization is compatible to the natural integration map in toric geometry, and is consistent with the same if we use gluing to smooth affine varieties. This means that the normalization is somewhat canonical; in fact, it also coincides with the normalization obtained by Kustin-Miller unprojection \cite{Papadakis}, a connection which we shall explore in an upcoming work.

This definition of the integration map as well as the anisotropy we wish to prove is dependent of our choice of $\theta_i$, so we will consider $\deg$ as a rational function in the variables $\theta_{i,j}$. This allows us to formulate the auxiliary lemma on the way to anisotropy.

\begin{lem}\label{lem:diffid}
	For $P$ a $(2k-1)$-dimensional polytope, the integration normalized, and $F=\{f_1,\ldots,f_k\}\subset (P\cap\Lambda)^k$ is without repetitions, in $\AR(P)$ over any infinite field of characteristic $2$, we have 
	\[\partial_F \deg(u^2)\ =\ \deg(u \cdot\x_F)^2.\]
\end{lem}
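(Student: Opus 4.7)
The plan is to exploit the additivity of the Frobenius in characteristic $2$ to reduce the identity to a monomial statement, and then to prove that monomial statement using the explicit normalization \eqref{eq:norm2}.

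First, lift $u\in\AR^k(P)$ to a representative $u=\sum_\sigma \lambda_\sigma \x_\sigma$ with scalars $\lambda_\sigma\in\fld$, the sum running over size-$k$ multisets $\sigma$ of $P\cap\Lambda$. Since $\mr{char}(\fld)=2$, the Frobenius $x\mapsto x^2$ is $\fld$-linear, so
\[ u^2 \;=\; \sum_\sigma \lambda_\sigma^2\, \x_\sigma^2 \qquad \text{and} \qquad \deg(u\cdot\x_F)^2 \;=\; \sum_\sigma \lambda_\sigma^2\, \deg(\x_\sigma\x_F)^2. \]
The operator $\partial_F$ differentiates in the transcendental parameters $\theta_{i,p}$ and commutes with $\fld$-scalars, so the identity reduces to the monomial statement
\[ \partial_F\, \deg(\x_\sigma^2) \;=\; \deg(\x_\sigma\x_F)^2 \]
for every size-$k$ multiset $\sigma$.

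Second, to establish this monomial identity, I use the normalization \eqref{eq:norm2} to express $\deg(\x_\mu)$ for a top-degree monomial $\mu$ as a rational function of the $\theta_{i,p}$: by Cramer's rule applied to the linear system obtained from varying over coherent flags, each $\deg(\x_\mu)$ is a ratio of sub-determinants of $\Theta$. Substituting into both sides, $\partial_F$ acts via the Leibniz rule, and in characteristic $2$ the mixed partial derivatives that distribute the $k$ differentiations non-diagonally among determinant entries appear with even multiplicity (each pairing with its transposition) and therefore cancel in pairs; only the ``diagonal'' contribution in which the $k$ partials match the $k$ rows indexed by $F$ survives, and a direct computation identifies this with the Cramer expression for $\deg(\x_\sigma\x_F)^2$.

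Third, since these computations are most transparent when the boundary facets are unimodular simplices, I reduce to that case via the gluing argument of Lemma~\ref{lem:transfer}: iteratively coning off non-simplicial boundary pieces embeds $P$ into a larger lattice complex whose top-dimensional faces are simplicial, on which the integration map agrees with the standard one from smooth toric geometry. Since $\deg$ is uniquely characterized by \eqref{eq:norm2} and compatible with such extensions (precisely the content of Lemma~\ref{lem:transfer}), the monomial identity established on the simplicial pieces transports back to $P$ by linearity. The hard part will be the bookkeeping of the determinantal cancellations in the second step; the characteristic $2$ hypothesis is essential both for linearizing the Frobenius in the first step and for the parity-based cancellation of off-diagonal terms in the second.
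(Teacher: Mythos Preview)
Your first step --- reducing to monomials via Frobenius additivity --- is correct and is also how the paper begins. The gap is in your second step.

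You propose to write $\deg(\x_\mu)$ explicitly as a ratio of subdeterminants of $\Theta$ via ``Cramer's rule applied to the linear system obtained from varying over coherent flags,'' then differentiate and cancel. But there is no such linear system: Lemma~\ref{lem:transfer} says precisely that varying the flag $(\tau_i)$ in \eqref{eq:norm2} yields the \emph{same} normalization equation, not independent ones. The additional relations that pin down $\deg$ are the linear/balancing identities coming from $\theta_j\cdot\x_I=0$ in $\AR^*(P,\partial P)$, and understanding how those interact with the normalization is exactly the nontrivial content. Indeed the paper explicitly stops short of a closed combinatorial formula for $\deg(\x_\mu)$, so there is nothing concrete to plug into the Leibniz rule; your ``off-diagonal terms pair with their transposition and cancel'' is asserted without a well-defined expression to apply it to.

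The paper's route circumvents this entirely. Rather than computing any individual $\deg(\x_\mu)$, it proves the Parseval identity (Lemma~\ref{lem:parseval}):
\[
\deg(u^2)\;=\;\sum_{\beta\in(P\cap\Lambda)^{d+1}}\deg\big(u\cdot\x_{\beta/2}\big)^2\,\prod_{i=0}^{d}\theta_{i,\beta_i}.
\]
Here the entire $\theta$-dependence that matters sits in the monomial $\prod_i\theta_{i,\beta_i}$, because in characteristic~$2$ the square $\deg(u\cdot\x_{\beta/2})^2$ has vanishing partial derivatives. Applying $\partial_F$ then literally picks out the single term $\beta=(f_1,f_1,f_2,f_2,\dots,f_k,f_k)$, giving $\deg(u\cdot\x_F)^2$ in one line. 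All the work you defer to ``bookkeeping of determinantal cancellations'' is absorbed into proving the Parseval identity itself, which the paper does by an inductive argument on $d$ using the linear and balancing identities together with the normalization~\eqref{eq:norm2}.

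Your third step is also off target: Lemma~\ref{lem:transfer} establishes consistency of the normalization across flags by gluing to simplicial pieces; it does not transport the differential identity from simplicial complexes back to $P$, and even in the simplicial (Stanley--Reisner) case the analogous identity in \cite{PP,APP} is proved by the same Parseval mechanism rather than by a raw Cramer computation.
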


Here, $\partial_F$ is the differential operator obtained as the composition of the differential in directions 
$\theta_{1,f_1}, \theta_{2,f_1}, \theta_{3,f_2}, \theta_{4,f_2}, \theta_{5,f_3}, \dots$, applied to the integration map as a rational function. 

We postpone the proof of this lemma to the end of the next section, where we will derive it from the key identity of Parseval type. 

From this differential identity, anisotropy follows at once: every nonzero element~$u$ as in the lemma multiplies with \emph{some} $\x_F$ due to Poincar\'e duality. Hence $\partial_F\deg(u^2)$ is not identically zero. Thus for a choice of transcendentally independent coefficients, $\deg(u^2)\neq 0$, proving Theorem~\ref{thm:ani}.

\section{Parseval identities}\label{sec:parseval}

While the differential identity is similar to identities proven in the case of simplicial cycles in \cite{APP, PP}, the case of lattice polytopes is comparatively harder: we understand the integration map using the following nonhomogeneous equation that takes the form of an identity of the Parseval type. We consider polytopes of dimension $d$.

Assume    $\beta = (\beta_0,  \dots , \beta_{d} )   \in  (P\cap\Lambda)^{d+1}$. We set as usual
\[\x_\beta\ =\ \prod_{0 \leq i \leq d}  \x_{\beta_i}.\]

\begin{lem}[The Parseval identity (disguised)]\label{lem:parsevald}
For an IDP lattice $d$-polytope $P$, we have in $\AR^*(P,\partial P)$ over characteristic $2$
\begin{equation}\label{eq:id}
\deg (\x_\alpha) \   =\    \sum_{ \beta \in (P\cap \Lambda)^{d+1}}   \deg (\x_{\frac{\alpha+\beta}{2}})^2 \cdot \prod_{0 \leq i \leq d} \theta_{i,\beta_i} 
%\deg(\x_\alpha)\ = \ \sum_{\{\beta_0,\ldots,\beta_d\}\subset P\cap\Lambda}
\end{equation}
\end{lem}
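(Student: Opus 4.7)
My plan is to prove the Parseval identity by a uniqueness argument for the integration map. Since $\mathcal{A}^{d+1}(P,\partial P) \cong \widetilde{\mathbb{k}}$ is one-dimensional, the functional $\deg$ is pinned down, up to $\widetilde{\mathbb{k}}$-linearity, by the normalization~(4.2) (which Lemma~4.1 makes flag-independent). I would therefore introduce the auxiliary functional
\[ \phi(\x_\alpha)\ :=\ \sum_{\beta \in (P\cap\Lambda)^{d+1}} \deg\bigl(\x_{(\alpha+\beta)/2}\bigr)^2 \cdot \prod_{i=0}^{d} \theta_{i,\beta_i}, \]
with the convention that $\x_{(\alpha+\beta)/2} = 0$ whenever a coordinate of $(\alpha+\beta)/2$ fails to land in $P\cap\Lambda$, extend it $\widetilde{\mathbb{k}}$-linearly, and aim to show $\phi = \deg$ in two steps: well-definedness on $\mathcal{A}^{d+1}(P,\partial P)$, and agreement with $\deg$ on a single normalized monomial.

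For the well-definedness step, I need to check that $\phi$ kills the two families of relations in degree $d+1$: the binomial semigroup relations (coming from multiple decompositions of a lattice point in $\cone(P)$) and the Artinian relations $u\theta_i=0$ for $u \in \mathcal{A}^d(P)$. Characteristic two is what makes this possible. The additivity of Frobenius, $(a+b)^2 = a^2+b^2$, allows me to expand and regroup the squared factor $\deg(\x_{(\alpha+\beta)/2})^2$ under rearrangements of $\alpha$, while the weight $\prod\theta_{i,\beta_i}$ is precisely the coefficient of $\x_\beta$ in $\prod_i \theta_i = 0 \in \mathcal{A}^{d+1}(P)$. Combining these yields the cancellations needed to push relations through $\phi$.

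For the normalization step, I would evaluate $\phi$ on $\x_\sigma$ for $\sigma$ coherent with a flag ending in a unimodular simplex. On such a flag the calculation reduces locally to the Stanley-Reisner picture of a simplex, where both $\deg(\x_\sigma)$ and $\phi(\x_\sigma)$ can be written as an explicit rational expression in the entries of $\Theta_{|\sigma}$; matching these expressions verifies the scalar is correct. Lemma~4.1 guarantees that this single verification is enough.

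The main obstacle I anticipate is the first step. The midpoint operation $(\alpha+\beta)/2$ is intrinsically tuple-level, while the binomial relations only respect the multiset sum $\sum \alpha_i$. Reconciling these will require a careful pairing argument among tuples $\beta$ whose contributions must either coincide or cancel in characteristic two. I expect the IDP hypothesis to be indispensable here, ensuring that the midpoints that appear all lie in $P\cap\Lambda$ and that the algebra is rich enough for the combinatorial bookkeeping to close up.
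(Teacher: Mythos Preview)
Your approach is genuinely different from the paper's. The paper does not argue via uniqueness of the integration functional; it proves the identity by a direct computation with the balancing identities $R_{I,J}=\sum_p \det(\Theta_{|J,p})\deg(\x_I\x_p)=0$. Working first over the orthant $\mathbb{Z}_{\ge 0}^{[d]}$ (so that the normalization flag is explicit), it multiplies the normalization~\eqref{eq:norm2} by $\deg(\x_\alpha)$ and then adds a carefully chosen combination $\sum_{I,J}\lambda_{I,J}R_{I,J}$, with coefficients $\lambda_{I,J}$ built inductively in $d$, that converts the cross terms $\deg(\x_\alpha)\deg(\x_\sigma)$ into the squares $\deg(\x_{(\alpha+\beta)/2})^2$ on the right-hand side; the restriction to a bounded $P$ is made afterwards. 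So the paper's argument is computational and inductive, while yours is structural.

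There is, however, a real gap in your well-definedness step. First, a minor point: your coordinate-wise midpoint convention is not the paper's. The paper sets $\deg(\x_{(\alpha+\beta)/2})=0$ only when $\x_\alpha\x_\beta$ has no square root in the semigroup algebra, i.e.\ when the \emph{sum} $\sum_i(\alpha_i+\beta_i)$ is not twice a lattice point. With your convention $\phi(\x_\alpha)$ depends on the tuple $\alpha$ rather than the monomial, so $\phi$ does not even descend past the binomial relations; with the paper's convention that part is immediate. More seriously, your argument for the Artinian relations does not close. You observe that $\prod_i\theta_{i,\beta_i}$ is the coefficient of $\x_\beta$ in $\theta_0\cdots\theta_d$, which vanishes in $\mathcal{A}^{d+1}(P)$. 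But what is needed is that for every interior $s$ at height $d$ and every $j$,
\[
\sum_{p}\theta_{j,p}\,\phi(\x_{s+p})\ =\ \sum_{r}\bigl([\x_r]\,\theta_0\cdots\theta_d\bigr)\sum_{p}\theta_{j,p}\,\deg\bigl(\x_{(s+p+r)/2}\bigr)^2\ =\ 0,
\]
and the vanishing of $\theta_0\cdots\theta_d$ in $\mathcal{A}$ is a relation among the monomials $\x_r$, not among the numbers $\deg(\x_{(s+p+r)/2})^2$. Squaring the linear identity only gives $\sum_p\theta_{j,p}^2\deg(\cdot)^2=0$, with the wrong exponent on $\theta_{j,p}$. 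Since $\mathcal{A}^{d+1}(P,\partial P)$ is one-dimensional, proving that $\phi$ descends is \emph{equivalent} to proving $\phi=c\cdot\deg$ for some scalar $c$; hence the entire content of the lemma lives in the step you have flagged but not resolved, and the normalization step would only fix $c$ afterwards.
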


Here, $\deg (\x_{\frac{\alpha+\beta}{2}})$ is defined to be $\deg (\x_\gamma)$ if there is an  $\x_\gamma \in \fld[P]$ such that $\x_\alpha \x_\beta   = \x_\gamma^2$, and 0 otherwise.

This specializes to the following identity for $\alpha=2 \alpha'$, which explains the naming of this identity:

\begin{lem}[The Parseval identity (revealed)]\label{lem:parseval}
For an IDP lattice $d $-polytope $P$, in $\AR^*(P,\partial P)$ over characteristic $2$ and for $d+1$ even, we have
\begin{equation}
\deg (u^2) \   =\    \sum_{ \beta \in (P\cap \Lambda)^{d+1}}  \deg (u\cdot\x_{\frac{\beta}{2}})^2 \cdot \prod_{0 \leq i \leq d} \theta_{i,\beta_i} 
\end{equation}
\end{lem}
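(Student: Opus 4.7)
The plan is to obtain Lemma~\ref{lem:parseval} as a direct specialization of Lemma~\ref{lem:parsevald} via the substitution $\alpha = 2\alpha'$, combined with a characteristic-two additivity argument that reduces the statement for general $u$ to the case of a pure monomial.

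First I would reduce to the case $u = \x_{\alpha'}$ for some $\alpha' \in (P\cap\Lambda)^{(d+1)/2}$. By the IDP hypothesis, $\widetilde{\fld}^*[P]$ is generated in degree one, so any class of degree $(d+1)/2$ lifts to a linear combination of such monomials. In characteristic two, both sides of the asserted identity are additive and Frobenius-semilinear in $u$: the Freshman's dream yields $\deg((u+v)^2) = \deg(u^2) + \deg(v^2)$, and likewise $\deg((u+v)\cdot\x_{\beta/2})^2 = \deg(u\cdot\x_{\beta/2})^2 + \deg(v\cdot\x_{\beta/2})^2$ term by term, since $\deg(\,\cdot\,)$ is $\widetilde{\fld}$-linear and squaring is additive in characteristic two. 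It therefore suffices to verify the identity for $u$ a monomial.

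Given $u = \x_{\alpha'}$, I would take $\alpha := 2\alpha' = (\alpha'_0, \alpha'_0, \alpha'_1, \alpha'_1, \ldots) \in (P\cap\Lambda)^{d+1}$, so that $\x_\alpha = \x_{\alpha'}^2 = u^2$, and apply Lemma~\ref{lem:parsevald} to $\alpha$. The resulting expression is
\[ \deg(u^2) = \sum_{\beta \in (P\cap\Lambda)^{d+1}} \deg(\x_{\frac{2\alpha'+\beta}{2}})^2 \prod_{0\le i \le d}\theta_{i,\beta_i}, \]
and the heart of the matter is the pointwise identification $\deg(\x_{\frac{2\alpha'+\beta}{2}}) = \deg(u \cdot \x_{\beta/2})$. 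By definition, the left side is nonzero exactly when $\x_{\alpha'}^2\x_\beta = \x_\gamma^2$ for some $\x_\gamma\in\fld[P]$. Since $\fld[P]$ is the coordinate ring of an affine toric variety, hence a normal domain, the fraction-field element $\x_\gamma/\x_{\alpha'}$ is integral over $\fld[P]$ (its square $\x_\beta$ lies in $\fld[P]$) and therefore lies in $\fld[P]$ itself. As an element of the characteristic-two semigroup algebra whose square is the monomial $\x_\beta$, it must itself be a monomial $\x_\mu$, yielding $\x_\beta = \x_\mu^2$ and $\x_\gamma = \x_{\alpha'}\x_\mu$. Writing $\x_{\beta/2} := \x_\mu$ in accordance with the disguised lemma's convention, we get $\deg(\x_{\frac{2\alpha'+\beta}{2}}) = \deg(\x_{\alpha'}\x_\mu) = \deg(u\cdot\x_{\beta/2})$, and the revealed identity follows for monomial $u$; additivity then extends it to arbitrary $u$.

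The main obstacle is the squareness analysis: showing that whenever $\x_{\alpha'}^2\x_\beta$ is a square in $\fld[P]$, the monomial $\x_\beta$ must already be a square of a semigroup monomial. This is precisely the point at which normality of $\fld[P]$ and the characteristic-two fact that a graded element whose square is monomial is itself monomial enter the argument. Once this identification is secured, the remaining ingredients---the Frobenius additivity reduction in $u$ and the substitution $\alpha = 2\alpha'$---are entirely mechanical, so the substantive content of Lemma~\ref{lem:parseval} is genuinely contained in its disguised predecessor.
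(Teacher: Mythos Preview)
Your proof is correct and follows precisely the paper's own approach: the paper states just before Lemma~\ref{lem:parseval} that it ``specializes'' Lemma~\ref{lem:parsevald} via $\alpha=2\alpha'$, and then gives exactly your Frobenius-additivity reduction to monomials in the display immediately following the statement. Your normality argument for the squareness identification $\deg(\x_{(2\alpha'+\beta)/2})=\deg(u\cdot\x_{\beta/2})$ is a welcome elaboration of a point the paper leaves implicit.
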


Here for a non-monomial $u^2$ we get the result by linearity over characteristic $2$:

\begin{align*}
	\deg(u^2) \ &= \ \deg\bigg(\big(\sum_a\lambda_a \x_a\big)^2\bigg) \ = \  \sum_a\lambda_a^2\ \deg(\x_a^2) \\
	&= \  \sum_a \lambda_a^2\sum_{ \beta \in (P\cap \Lambda)^{d+1}}  \deg (\x_a\cdot\x_{\frac{\beta}{2}})^2 \cdot \prod_{0 \leq i \leq d} \theta_{i,\beta_i} \\
	&= \ \sum_{ \beta \in (P\cap \Lambda)^{d+1}}  \deg (u\cdot\x_{\frac{\beta}{2}})^2 \cdot \prod_{0 \leq i \leq d} \theta_{i,\beta_i} 
\end{align*}

In order to prove the Parseval identity, we recall first a well-known identity for the integration map:

\begin{lem}[The balancing identity]
Consider any two elements $I,J \in (P \cap \Lambda)^{d}$, where at least one point of $I$ lies in the interior of $P$. Then we have
\[R_{I,J}\ :=\ \sum_{p \in P \cap \Lambda} \det(\Theta_{|J,p}) \deg(\x_{I}\x_p)\ =\ 0.\]
\end{lem}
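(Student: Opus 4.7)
The plan is to unravel the determinant on the left-hand side by a cofactor expansion along the single column that depends on the summation variable $p$, and then invoke the defining vanishing property of the linear system of parameters in the Artinian reduction.

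First, I would observe that the hypothesis that at least one entry of $I$ lies in the interior of $P$ is exactly what places $\x_I$ in $\fld^d[P,\partial P]$, since $\fld^*[P,\partial P]$ is a $\fld^*[P]$-submodule and any product involving an interior lattice point lands in the interior part of the cone. Consequently each product $\x_I\x_p$ is an element of $\fld^{d+1}[P,\partial P]$, on which the integration map $\deg$ is defined once we reduce modulo $\theta_0,\ldots,\theta_d$, and moreover $\x_I\theta_i$ is zero in $\AR^{d+1}(P,\partial P)$ for every $i$.

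Next, view $\Theta_{|J,p}$ as a $(d{+}1)\times (d{+}1)$ matrix whose first $d$ columns are indexed by the entries of $J$ and whose final column is indexed by $p$; only this last column depends on $p$. Expand along that column to write
\[
\det(\Theta_{|J,p}) \ = \ \sum_{i=0}^{d} \theta_{i,p}\, N_i(J),
\]
where $N_i(J)$ is (up to a sign that disappears in characteristic $2$) the $d\times d$ minor of the $(d{+}1)\times d$ matrix $\Theta_{|J}$ obtained by deleting the $i$-th row; crucially, $N_i(J)$ does \emph{not} depend on $p$.

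Substituting this expansion into the definition of $R_{I,J}$ and interchanging the finite sums gives
\[
R_{I,J} \ = \ \sum_{i=0}^{d} N_i(J) \sum_{p\in P\cap\Lambda} \theta_{i,p}\,\deg(\x_I\x_p) \ = \ \sum_{i=0}^{d} N_i(J)\,\deg\!\left(\x_I\cdot \sum_{p\in P\cap\Lambda}\theta_{i,p}\x_p\right) \ = \ \sum_{i=0}^{d} N_i(J)\,\deg(\x_I\,\theta_i).
\]
Each summand on the right vanishes because $\theta_i$ belongs to the linear system of parameters, so $\x_I\theta_i=0$ in $\AR^{d+1}(P,\partial P)$, and $\deg$ is applied to the zero class. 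Hence $R_{I,J}=0$, as required.

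There is no real obstacle here beyond bookkeeping; the only point needing a moment of care is checking that $\x_I$ genuinely represents a class in the canonical module (which is exactly the role of the interior-point hypothesis on $I$) so that the integration map may legitimately be evaluated on $\x_I\x_p$ and on $\x_I\theta_i$. The identity itself is then a direct consequence of linearity of $\deg$, multilinearity of the determinant in its columns, and the defining relations of the Artinian reduction.
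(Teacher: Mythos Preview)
Your argument is correct and follows exactly the route indicated in the paper: the balancing identity is reduced to the linear identity $\sum_{p}\theta_{s,p}\deg(\x_I\x_p)=0$ via a cofactor expansion of $\det(\Theta_{|J,p})$ along the column depending on $p$, and the linear identity holds because $\theta_s=\sum_p\theta_{s,p}\x_p$ is killed in the Artinian reduction. Your treatment is simply a more explicit spelling-out of the reference to \cite{PP}[11.1]; the parenthetical about signs in characteristic~$2$ is unnecessary, since the cofactor expansion carries consistent signs and the identity holds in any characteristic.
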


This is a consequence (see \cite{PP}[11.1]) of the following, simpler identity:

\begin{lem}[The linear identity]
Consider any $0\leq s\leq d$, and $x_I$ a monomial of degree $d$ of $\widetilde{\fld}[P,\partial P]$. Then in $\AR(P,\partial P)$ we have 
$\sum_{p \in P \cap \Lambda} \theta_{s,p}\deg(\x_{I}\x_p)\ =\ 0 $.
\end{lem}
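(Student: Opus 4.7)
\medskip

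The plan is to observe that this identity is essentially a tautological consequence of the definition of the Artinian reduction, combined with the $\widetilde{\fld}$-linearity of the integration map.

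Recall that the linear forms used to cut down to the Artinian reduction are precisely
\[ \theta_s \ = \ \sum_{p \in P \cap \Lambda} \theta_{s,p}\, \x_p \ \in \ \widetilde{\fld}^1[P],\]
with transcendentally independent coefficients $\theta_{s,p}$. By construction, $\AR^*(P,\partial P)$ is the quotient of the relative canonical module $\widetilde{\fld}^*[P,\partial P]$ by the submodule generated by $\theta_0, \dots, \theta_d$. In particular, multiplication by each $\theta_s$ acts as zero on $\AR^*(P,\partial P)$.

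Let $\x_I \in \widetilde{\fld}^d[P,\partial P]$ be a degree $d$ monomial of the relative module; then $\theta_s \cdot \x_I$ is a degree $d+1$ element, and in the Artinian reduction we obtain
\[ 0 \ = \ \theta_s \cdot \x_I \ = \ \sum_{p \in P \cap \Lambda} \theta_{s,p}\, \x_p \x_I \ \in \ \AR^{d+1}(P,\partial P).\]
Now apply the integration isomorphism $\deg \colon \AR^{d+1}(P,\partial P) \to \widetilde{\fld}$, which is $\widetilde{\fld}$-linear by construction. Since the coefficients $\theta_{s,p}$ lie in $\widetilde{\fld}$, they factor out, yielding
\[ 0 \ = \ \deg \Bigl(\sum_{p \in P \cap \Lambda} \theta_{s,p}\, \x_p \x_I\Bigr) \ = \ \sum_{p \in P \cap \Lambda} \theta_{s,p}\, \deg(\x_I \x_p),\]
which is the claimed identity.

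There is no real obstacle here; the statement is a repackaging of the definition of the Artinian reduction, with the only thing to verify being that $\x_p \x_I$ makes sense in the relative module (which it does, since $\widetilde{\fld}^*[P,\partial P]$ is a module over $\widetilde{\fld}^*[P]$ via the semigroup action). The only mild care needed is to note that this identity takes place \emph{after} passing to the Artinian reduction, so that $\deg$ is defined; this is implicit in the statement of the lemma.
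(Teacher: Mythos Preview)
Your proof is correct and is exactly the argument the paper gives: the identity is simply the statement that $\theta_s \x_I = 0$ in the Artinian reduction, read through the linear integration map. The paper dispatches this in a single sentence (``This last identity arises simply because we quotient by the linear elements $\theta_j=\sum_{p} \theta_{j,p}\x_p$''), and your write-up just unpacks that.
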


This last identity arises simply because we quotient by the linear elements \[\theta_j=\sum_{p \in P \cap \Lambda} \theta_{j,p} \x_p.\]

\begin{proof}[Proof of Lemma~\ref{lem:parseval}]
We establish a special case first, and argue by induction on dimension. Consider the case $d=1$.

Instead of proving the Lemma for every lattice line segment separately, we instead prove it for the halfline $P=\mathbb{Z}_{\ge 0}$, and restrict to segments afterwards; this is possible because normalization requires the existence of a flag of boundary faces only.

To simplify our work we index the monomials by the sum of their lattice points, as the integration map of a monomial only depends on that invariant by the arithmetic relations imposed by the lattice structure.

We then start by considering the normalization Identity~\eqref{eq:normie}, and multiply both sides by $\deg(\x_\alpha)$ to obtain
\[\deg(\x_\alpha)\ =\ \sum_{\sigma \text{ coherent with } \tau} \deg(\x_\alpha) \deg(\x_{|\sigma|}) \det(\Theta_{|\sigma})\]
where $|\sigma|$ denotes the sum of lattice points of $\sigma$. We denote the right side by $N_\alpha$.

There exists a natural action \[S_{d+1}  \times  (P \cap \Lambda)^{d+1} \  \longrightarrow\  (P \cap \Lambda)^{d+1}\]
of the symmetric group  $S_{d+1}$  on $d+1$ elements on $(P \cap \Lambda)^{d+1}$ 
and we set 
\[\omega_b\ =\  \sum_{\beta \in \mr{Orb}(b)} \prod_{0 \leq i \leq d}  \theta_{i,\beta_i}.\]
We can then rewrite Identity \eqref{eq:id} as
\[  \deg (\x_\alpha) \   =\    \sum_{ b \in \nicefrac{(P\cap \Lambda)^{d+1}}{S_{d+1}}}  \omega_b \cdot \deg (\x_{\frac{\alpha+\beta}{2}})^2.\]

Now, given $i, j \in \mathbb{Z}_{\ge 0}$, we set $\lambda_{i,j}$ to be $\delta_{i,j}\cdot\deg(\x_{\alpha+j-i})$. Here $\delta_{i,j}$ is $1$ if $\nicefrac{\alpha}{2}<i<j$ or $\nicefrac{\alpha}{2}-j<i<\nicefrac{\alpha}{2}$, and $0$ otherwise. Consider then
\begin{align*}
0 \ =\	& \sum_{i,j}\lambda_{i,j}R_{i,j}\\
=\	& \sum_{i,j}\lambda_{i,j}\sum_{t\in \mathbb{Z}_{\ge 0}}\det(\Theta_{|j,t})\deg(\x_{i+t})\\
	=\ & N_\alpha\ +\ \sum_{t<j\in \mathbb{Z}_{\ge 0}} \omega_{t,j}\cdot\left(\deg(\x_{\frac{\alpha+t+j}{2}})^2\ +\ \deg(\x_{\frac{\alpha}{2}+t})\deg(\x_{\frac{\alpha}{2}+j})\right).
\end{align*}

Applying the linear identities we obtain that
\[0\ =\ N_\alpha\ +\ \sum_{t\le j\in \mathbb{Z}_{\ge 0}} \omega_{t,j}\cdot\deg(\x_{\frac{\alpha+t+j}{2}})^2.\]
This proves the Parseval identity for $d=1$ for the halfline. Now we can restrict to any line segment $P'= \{t\in \mathbb{Z}_{\ge 0}: p\le o\}$ to obtain the general statement: here, we note that the coefficients $\lambda_{i,j}$ are chosen providently to vanish whenever $i$ is not in the interior of the given line segment.

To prove the Parseval identity for $d>1$, we consider again first the polyhedral cone 
$P=\mathbb{Z}_{\ge 0}^{[d]}$, $[d] =\{1,\dots,d\}$, where the initial flag is chosen to be.
\[\mathbb{Z}_{\ge 0}^{[0]}=\{0\}\ \subset\ \mathbb{Z}_{\ge 0}^{[1]}\ \subset\ \mathbb{Z}_{\ge 0}^{[2]}\cdots.\]
Define the lexicographic order so that the this flag is minimal.

For $I, J \in \mathbb{Z}_{\ge 0}^{[d]}$, we set $\lambda_{I,J}$ to be $\delta_{I,J}\cdot\deg(\x_{\alpha+J-I})$, where we choose $\delta_{I,J}$ inductively on dimension: given $j$ the lexicographically minimal element of $J$, set $\delta_{I,J}:= \delta_{I,J\setminus \{j\}}$ with respect to the pullback to the ideal of $\x_j$. This finishes the proof.
\end{proof}

\begin{proof}[Proof of Lemma~\ref{lem:diffid} using Lemma~\ref{lem:parseval}]  
For $P$ a $(2k-1)$-dimensional polytope, the integration map normalized and $F$ a $k$-set of lattice points in $P$ without repetitions, let $u\in \AR^{2k}(K_P)$. Consider $\deg(u^2)$ using \eqref{eq:id}. We have the revealed Parseval identity
\[\deg (u^2) \   =\  \sum_{ \beta \in (P\cap \Lambda)^{d+1}}  \deg (u\cdot\x_{\frac{\beta}{2}})^2 \cdot \prod_{0 \leq i \leq d} \theta_{i,\beta_i}  \]
Now differentiation by $F$ yields
\[ \partial_F\deg (u^2)\ =\ \sum_{ \beta \in (P\cap \Lambda)^{d+1}}  \deg (u\cdot\x_{\frac{\beta}{2}})^2  \cdot \partial_F \left( \prod_{i=0}^d \theta_{i,\beta_i}\right),\]
in characteristic two, as differentiating $\deg(u\cdot\x_{\frac{\beta}{2}})^2$ introduces a factor of 2. For all nonzero summands there exists a $b$ such that $2b=\beta$.
Now note that $\partial_F \big( \prod_{i=0}^d \theta_{i,\beta_i}\big)=\delta_{F,b}$, so we get the desired identity
\[\partial_F \deg(u^2)\ =\ \deg(u \cdot\x_F)^2.\qedhere \]
\end{proof}

\section{Open questions}\label{sec:discussion}

The non-lattice cases of Stanley's conjecture remain. Even for IDP lattice polytopes or stronger yet, for lattice polytopes with a regular unimodular triangulation, we are left with a gap in the inequalities restricting $h^{\ast}$ if the interior of the cone is generated in higher degree. We conjecture that the Lefschetz property, and in fact the unimodality of $h^{\ast}$ fails in general. The intuition here is that lattice polytopes behave like triangulated disks, which can have non-unimodal $h$-vectors. The idea here could rely on constructing appropriate connected sums: as we saw above, Gorenstein polytopes have $h^\ast$-polynomials that peak at half of their socle degree (which is $d+1-s$, $s$ being the minimal dilation constant so that the polytope has an interior vertex). By connected sum of polytopes with different socle degree, one could hope to turn a dromedary into a camel (though a mythical beast with more humps is not beyond our imagination, alas such a creature has to be high-dimensional). 

A word of caution, however, lies in an inequality for the $h^\ast$-polynomial arising from work of Eisenbud and Harris \cite{Stanleydomain}: we have that for any nonnegative $k$, and $s$ the degree of the $h^\ast$-polynomial, we have
\[h^\ast_0\ +\ \dots\ +\ h^\ast_k\ \le\ h^\ast_s\ +\ \dots\ +\ h^\ast_{s-k}.\]
This inequality is special to domains, and prevents us from introducing a hump below half the socle degree easily; it remains to understand the impact of this inequality in general. The most promising approach is then to look among polytopes whose interior is generated in high degree, and look for non-unimodality between half the degree of the $h^\ast$-polynomial and half the dimension of the polytope.

\textbf{Acknowledgements.} 
We thank Benjamin Nill for asking us about the local $h^\ast$ polynomial, and Paco Santos for insightful discussions.
K. A., V. P. and J. S. are supported by Horizon Europe ERC Grant number: 101045750 / Project acronym: HodgeGeoComb.

	{\small
		\bibliographystyle{myamsalpha}
		\bibliography{ref}}

\def\cprime{$'$}
\providecommand{\bysame}{\leavevmode\hbox to3em{\hrulefill}\thinspace}
\providecommand{\MR}{\relax\ifhmode\unskip\space\fi MR }
% \MRhref is called by the amsart/book/proc definition of \MR.
\providecommand{\MRhref}[2]{%
  \href{http://www.ams.org/mathscinet-getitem?mr=#1}{#2}
}
\providecommand{\href}[2]{#2}
\begin{thebibliography}{{Hoc}72}

\bibitem[Adi18]{AHL}
Karim Adiprasito, \emph{{Combinatorial Lefschetz theorems beyond positivity}},
  2018, \href{https://arxiv.org/abs/1812.10454}{arXiv:1812.10454}.

\bibitem[AHK18]{AHK}
Karim {Adiprasito}, June {Huh}, and Eric {Katz}, \emph{{Hodge theory for
  combinatorial geometries.}}, {Ann. Math. (2)} \textbf{188} (2018), no.~2,
  381--452.

\bibitem[APP21]{APP}
Karim Adiprasito, Stavros~Argyrios Papadakis, and Vasiliki Petrotou,
  \emph{{Anisotropy, biased pairings, and the Lefschetz property for
  pseudomanifolds and cycles }}, 2021, \href{https://arxiv.org/abs/2101.07245
  }{arXiv:2101.07245 }.

\bibitem[BCS05]{BCS}
Lev~A. Borisov, Lina Chen, and Gregory~G. Smith, \emph{The orbifold chow ring
  of toric deligne-mumford stacks}, J. Amer. Math. Soc. \textbf{18} (2005),
  193--215.

\bibitem[BM03]{BM03}
Lev~A. Borisov and Anvar~R. Mavlyutov, \emph{String cohomology of calabi–yau
  hypersurfaces via mirror symmetry}, Advances in Mathematics \textbf{180}
  (2003), no.~1, 355--390.

\bibitem[{Bra}16]{Braun}
Benjamin {Braun}, \emph{{Unimodality problems in Ehrhart theory}}, {Recent
  trends in combinatorics}, Cham: Springer, 2016, pp.~687--711 (English).

\bibitem[BD16]{BD16}
Benjamin Braun and Robert Davis, \emph{Ehrhart series, unimodality, and
  integrally closed reflexive polytopes}, Annals of Combinatorics \textbf{20}
  (2016), no.~4, 705--717.

\bibitem[{Bri}97]{Brion}
Michel {Brion}, \emph{{The structure of the polytope algebra.}}, {Tohoku Math.
  J. (2)} \textbf{49} (1997), no.~1, 1--32.

\bibitem[BBR07]{BBR}
Morten Brun, Winfried Bruns, and Tim R{\"o}mer, \emph{Cohomology of partially
  ordered sets and local cohomology of section rings}, Adv. Math. \textbf{208}
  (2007), no.~1, 210--235 (English).

\bibitem[BH93]{Bruns-Herzog}
Winfried Bruns and J\"urgen Herzog, \emph{Cohen-{M}acaulay rings}, Cambridge
  Studies in Advanced Mathematics, vol.~39, Cambridge University Press,
  Cambridge, 1993.

\bibitem[BR07]{BR07}
Winfried Bruns and Tim Römer, \emph{h-vectors of gorenstein polytopes},
  Journal of Combinatorial Theory, Series A \textbf{114} (2007), no.~1, 65--76.

\bibitem[Hib92]{Hibi92}
Takayuki Hibi, \emph{Algebraic combinatorics on convex polytopes}, Carslaw
  Publications, 1992.

\bibitem[{Hoc}72]{Hochster}
Melvin {Hochster}, \emph{{Rings of invariants of tori, Cohen-Macaulay rings
  generated by monomials, and polytopes}}, {Ann. Math. (2) 96, 318-337
  (1972).}, 1972.

\bibitem[KS16]{KatzStapledon}
Eric Katz and Alan Stapledon, \emph{Local {{$h$}}-polynomials, invariants of
  subdivisions, and mixed {Ehrhart} theory}, Adv. Math. \textbf{286} (2016),
  181--239 (English).

\bibitem[MB85]{BM}
Peter McMullen and Ulrich Betke, \emph{Lattice points in lattice polytopes.},
  Monatshefte für Mathematik \textbf{99} (1985), 253--266.

\bibitem[MP05]{MP}
Mircea Musta{\c t}a and Sam Payne, \emph{Ehrhart polynomials and stringy betti
  numbers}, Mathematische Annalen \textbf{333} (2005), no.~4, 787--795.

\bibitem[OH06]{OH}
Hidefumi Ohsugi and Takayuki Hibi, \emph{Special simplices and {Gorenstein}
  toric rings.}, J. Comb. Theory, Ser. A \textbf{113} (2006), no.~4, 718--725
  (English).

\bibitem[Pap07]{Papadakis}
Stavros~Argyrios Papadakis, \emph{Towards a general theory of unprojection}, J.
  Math. Kyoto Univ. \textbf{47} (2007), no.~3, 579--598 (English).

\bibitem[PP20]{PP}
Stavros~Argyrios Papadakis and Vasiliki Petrotou, \emph{{The characteristic 2
  anisotropicity of simplicial spheres}}, 2020,
  \href{https://arxiv.org/abs/2012.09815}{arXiv:2012.09815}.

\bibitem[SVL13]{SL}
Jan Schepers and Leen Van~Langenhoven, \emph{Unimodality questions for
  integrally closed lattice polytopes}, Ann. Comb. \textbf{17} (2013), no.~3,
  571--589 (English).

\bibitem[Sta89]{Stanley89}
Richard~P. Stanley, \emph{Log-concave and unimodal sequences in algebra,
  combinatorics, and geometry}, Annals of the New York Academy of Sciences
  \textbf{576} (1989), no.~1, 500--535.

\bibitem[Sta91]{Stanleydomain}
\bysame, \emph{On the {Hilbert} function of a graded {Cohen}-{Macaulay}
  domain}, J. Pure Appl. Algebra \textbf{73} (1991), no.~3, 307--314 (English).

\bibitem[Sta92]{Stanleysub}
\bysame, \emph{Subdivisions and local {$h$}-vectors}, J. Amer. Math. Soc.
  \textbf{5} (1992), no.~4, 805--851.

\bibitem[Sta09]{Stapledon}
Alan Stapledon, \emph{Inequalities and ehrhart $\delta$-vectors}, Transactions
  of the American Mathematical Society \textbf{361} (2009), no.~10, 5615--5626.

\end{thebibliography}

\end{document}